\newtheorem{theorem}{Theorem}[section]
\newtheorem{lemma}{Lemma}[section]
\newtheorem{definition}{Definition}[section]
\newtheorem{rem}{Remark}[section]
\newtheorem{ex}{Example}[section]
\begin{document}
\thispagestyle{empty}
\begin{center}

{\bf\Large Stability analysis of multi-term fractional-differential equations with three fractional derivatives}

\vspace{8mm}


{\large Oana Brandibur$^1$,\  Eva Kaslik$^{1,2}$}

\vspace{3mm}

{\em $^1$ Dept. of Math. and Comp. Science, West University of Timi\c{s}soara, Romania\\
$^2$ Institute e-Austria Timi\c{s}oara, Romania\\
E-mail: oana.brandibur@e-uvt.ro, \ eva.kaslik@e-uvt.ro}

\vspace{5mm}
\end{center}
\vspace{8mm}

\textbf{Abstract:} Necessary and sufficient stability and instability conditions are obtained for multi-term homogeneous linear fractional differential equations with three Caputo derivatives and constant coefficients. In both cases,  fractional-order-dependent as well as fractional-order-independent characterisations of stability and instability properties are obtained, in terms of the coefficients of the multi-term fractional differential equation. The theoretical results are exemplified for the particular cases of the Basset and Bagley-Torvik equations, as well as for a multi-term fractional differential equation of an inextensible pendulum with fractional damping terms, and for a fractional harmonic oscillator. 

\textbf{Keywords:} multi-term fractional differential equation,  Caputo derivative,  stability, instability

 \medskip

\section{Introduction}
\label{sec:1}

Fractional calculus has been gaining increased attention over the past several decades, driven by the suitability of the nonlocal fractional-order differential operators to describe memory and hereditary properties of various real world processes \cite{Cottone,Mainardi_1996}, in comparison with their classical integer-order counterparts. Due to this fact, fractional-order operators have been employed in the mathematical modeling of various phenomena deriving from engineering, control theory, viscoelasticity, rheology,  electrochemistry, biophysics, mechanics and mechatronics, signal and image processing, etc. \cite{Diethelm_book,Kilbas,Lak,Podlubny}. As an example, in the field of neuroscience, the fractional-order of a derivative has been interpreted as the index of memory \cite{du2013measuring}. 

As in most cases, the fractional-order differential equations and systems used in the mathematical modeling of practical problems are not explicitly solvable, their  qualitative theory, and markedly the stability and asymptotic properties of their solutions are of uttermost importance, as depicted in two recent comprehensive surveys \cite{Li-survey,Rivero2013stability}. 

In the case of nonlinear commensurate fractional-order systems, Lyapunov's first method \cite{cong2020,Li_Ma_2013,Sabatier2012stability,Wang2016stability} can be successfully used to construct the linearized system in a neighborhood of an equilibrium, and then, Matignon's stability theorem \cite{Matignon} and its generalization \cite{Sabatier2012stability} provide the required information about the stability of the equilibrium. Furthermore, it has been very recently shown \cite{cong2020} that non-trivial solutions of such systems cannot converge to the equilibria faster than $t^q$, where
$q$ is the fractional order of the system. 

Nonetheless, in the case of incommensurate fractional-order systems, the stability analysis of the linearized counterparts increases in complexity, and fewer results have been reported in the literature \cite{Petras,trachtler2016bibo}.  The asymptotic properties of certain classes of linear multi-order systems of fractional differential equations (e.g. systems with block triangular coefficient matrices) have been explored in \cite{diethelm2017asymptotic}. Recently, the complete stability and instability analysis of two-dimensional incommensurate linear systems of fractional differential equations has been  presented in \cite{Brandibur_2020,Brandibur_2018}, where a characterisation of the stability properties has been obtained in terms of the main diagonal elements of the system’s matrix and its determinant.
 
In view of the close relation to fractional-order linear systems, qualitative properties of multi-term fractional-order differential equations \cite{atanackovic2014cauchy} have also been explored in recent years. It has been shown \cite{li2013equivalent} that a linear multi-term fractional-order differential equation with rational-order derivatives is equivalent to a system of fractional-order differential equations with derivatives of the same order. This common order is in fact the inverse of the least common denominator of the considered rational orders, and hence, Matignon's stability theorem proves its utility once again. As a special case, the stability of the well-known Bagley-Torvik  equation \cite{torvik1984appearance} can be analyzed along the same lines. However, to the best of our knowledge, in the presence of at least one irrational-order derivative, stability properties have only been explored in the case of two-term fractional-order differential equations  \cite{cermak2015asymptotic,cermak2015stability,jiao2012stability}.  Moreover, it is also worth noting that boundary value problems for multi-term fractional differential equations have been considered in \cite{gejji2008,luchko2011}. 

The aim of this paper is to analyze multi-term fractional-order equations with three fractional-order derivatives of Caputo type (not necessarily of rational orders). In fact, we extend recent results obtained for linear two-dimensional incommensurate fractional-order systems \cite{Brandibur_2020}, based on the similarity of the structures of the corresponding characteristic equations. As a consequence, we also establish a fractional version
of the Routh–Hurwitz criterion for linear multi-term fractional-order differential equations with three Caputo derivatives, generalizing the results presented in \cite{cermak2015asymptotic,cermak2015stability}. More precisely, we determine stability and instability conditions in terms of the coefficients of the considered multi-term fractional-order equation, as well as the considered fractional orders of the Caputo derivatives. We also deduce necessary and sufficient conditions for the asymptotic stability and instability of the fractional-order differential equation, regardless of the choice of the considered fractional orders.   

The paper is structured as follows. After enumerating some preliminaries on fractional calculus and basic definitions in section 2, the next two sections are dedicated to presenting and proving the main results of this work, namely, fractional-order dependent stability and instability results in section 3, as well as fractional-order independent stability and instability results in section 4, for the considered multi-term fractional differential equation. Examples are given in section 5, substantiating the theoretical results. Conclusions are formulated in section 6. 
 
\section{Preliminaries}
\label{sec:2}

\begin{definition}
The Caputo fractional differential operator of order $q>0$ is defined as $$^c\!D^q x(t):=\begin{cases} \displaystyle\dfrac{1}{\Gamma(n-q)}\int_0^t \dfrac{x^{(n)}(\tau)}{(t-\tau)^{q+1-n}}d\tau,& \text{if }n-1<q<n\\ \dfrac{d^nx(t)}{dt^n},& \text{if }q=n \end{cases},$$ where $n\in\mathbb{N}$ and  the Gamma function is $\Gamma(z)=\displaystyle\int_0^{\infty}y^{z-1}e^{-y}dy$, for $\Re(z)>0$. 
\end{definition}

We consider the following multi-term fractional differential equation
\begin{equation}\label{eq.multi.order}
    ^c\!D^q x(t)+\alpha ^c\!D^{q_1}x(t)+\beta ^c\!D^{q_2}x(t)+\gamma x(t)=0,
\end{equation}
where $\alpha,\beta,\gamma$ are real numbers and $q,q_1,q_2$ are the fractional orders of the Caputo derivatives, with $0<q_1<q_2<q\leq 2$. 

The linear homogeneous equation \eqref{eq.multi.order} can be thought of as the linearization at the trivial equilibrium of a nonlinear autonomous fractional-order differential equation of the form
\begin{equation}\label{eq.nonlin.multi}
   ^c\!D^q x(t)=f(x(t),^c\!D^{q_1} x(t),^c\!D^{q_2} x(t)) 
\end{equation}
where $f:\mathbb{R}^3\rightarrow\mathbb{R}$ is a continuously differentiable function such that $f(0,0,0)=0$. At the same time, the equation \eqref{eq.multi.order} is also strongly connected to the generic equation of a nonlinear damped pendulum with fractional damping terms
\[Ax''(t)+\sum_{k=1}^N B_k~^c\!D^{q_k} x(t)=f(t,x(t))\]
which has been investigated in \cite{brestovanska2014asymptotic,seredynska2005nonlinear}, pointing out that fractional derivatives are particularly useful in the mathematical modeling of  damping forces in vibrating systems in
viscous fluids.  

The initial condition associated to equation \eqref{eq.multi.order} is either
    \begin{enumerate}
        \item[i.] $x_i=x(0)=x_0\in\mathbb{R}$, if $q\in (0,1]$, or
        \item[ii.] $x_i=(x(0),x'(0))=(x_0,x_1)\in\mathbb{R}^2$ if $q\in(1,2)$.
    \end{enumerate}

In what follows, $\varphi(t,x_i)$ denotes the unique solution of (\ref{eq.multi.order}) satisfying one of the initial considered above. The existence and uniqueness of the solution of the initial value problem associated to the equation (\ref{eq.multi.order}) can be proved similarly to the case of fractional-order systems of differential equations \cite{Diethelm_book,seredynska2005nonlinear}. In fact, the solution of the linear constant coefficient fractional differential
equation \eqref{eq.multi.order} can be expressed using the fractional Green’s function as shown in \cite{Podlubny}, or by employing the fractional meta-trigonometric approach in the commensurate case \cite{lorenzo2013application}.

Due to the presence of the memory effect and hereditary properties, it is important to mention that the asymptotic stability of the trivial solution of equation (\ref{eq.multi.order}) is not of exponential type \cite{cermak2015stability,Gorenflo_Mainardi}. Instead, a non-exponential asymptotic stability concept is required in this case, called Mittag-Leffler stability \cite{Li_Chen_Podlubny}. Hereafter, in our work, we focus our attention on  $\mathcal{O}(t^{-\alpha})$-asymptotic stability, which reflects the algebraic decay of the solution.

\begin{definition}\label{def.stability}$ $
    \begin{enumerate}
        \item[i.] The trivial solution of (\ref{eq.multi.order}) is called \emph{stable} if for any $\varepsilon>0$
	there exists $\delta=\delta(\varepsilon)>0$ such that for every $x_i$ satisfying $\|x_i\|<\delta$ we have
	$|\varphi(t,x_i)|\leq\varepsilon$ for any $t\geq 0$.
	    \item[ii.] The trivial solution  of (\ref{eq.multi.order}) is called \emph{asymptotically stable} if it is stable and there
	exists $\rho>0$ such that $\lim\limits_{t\rightarrow\infty}\varphi(t,x_i)=0$ whenever $\|x_i\|<\rho$.
	    \item[iii.] Let $\alpha>0$. The trivial solution  of (\ref{eq.multi.order}) is called \emph{$\mathcal{O}(t^{-\alpha})$-asymptotically stable} if it is stable and there exists $\rho>0$ such that for any $\|x_i\|<\rho$ one has:
	$$|\varphi(t,x_i)|=\mathcal{O}(t^{-\alpha})\quad\textrm{as }t\rightarrow\infty.$$
	\end{enumerate}
\end{definition}

We recall the Laplace transform of the Caputo derivative or an arbitrary fractional order $q$ \cite{Podlubny}:
\begin{definition}
    The Laplace transform for the fractional-order Caputo derivative of order $q\in (n-1,n]$, $n\in\mathbb{N}^*$, of a function $x$ is: 
    $$\mathcal{L}(^c\!D^q x)(s)=s^qX(s)-\sum\limits_{k=0}^{n-1}s^{q-k-1}x^{(k)}(0),$$ where $X(s)$ represents the Laplace transform of the function $x$.
\end{definition}

\begin{rem}$ $
    \begin{enumerate}
        \item[i.] If $q\in (0,1]$, the Laplace transform of the fractional-order Caputo derivative is $$\mathcal{L}(^c\!D^q x)(s)=s^qX(s)-s^{q-1}x_0\quad \text{where}\quad x_0=x(0).$$
        \item[ii.] If $q\in(1,2]$, the Laplace transform of the fractional-order Caputo derivative is $$\mathcal{L}(^c\!D^q x)(s)=s^qX(s)-s^{q-1}x_0-s^{q-2}x_1\quad \text{where}\quad x_0=x(0)\ \text{and}\ x_1=x'(0).$$
    \end{enumerate}
\end{rem}

Applying the Laplace transform, equation \eqref{eq.multi.order} becomes
$$(s^q+\alpha s^{q_1}+\beta s^{q_2}+\gamma)X(s)=F(s),$$ where $X(s)$ represents the Laplace transform of the function $x$, $s^{q^*}$ represents the first branch of the complex power function \cite{Doetsch}, with $q^*\in \{q,q_1,q_2\}$ and $$F(s)=\begin{cases}
s^{q-1}x_0+\alpha s^{q_1-1}x_0+\beta s^{q_2-1}x_0,\ \text{if}\ 0<q_1<q_2<q<1\\
s^{q-1}x_0+s^{q-2}x_1+\alpha s^{q_1-1}x_0+\beta s^{q_2-1}x_0,\ \text{if}\ 0<q_1<q_2<1<q<2\\
s^{q-1}x_0+s^{q-2}x_1+\alpha s^{q_1-1}x_0+\beta s^{q_2-1}x_0+\beta s^{q_2-2}x_1,\ \text{if}\ 0<q_1<1<q_2<q<2\\
s^{q-1}x_0+s^{q-2}x_1+\alpha s^{q_1-1}x_0+\alpha s^{q_1-2}x_1+\beta s^{q_2-1}x_0+\beta s^{q_2-2}x_1,\ \text{if}\ 1<q_1<q_2<q<2
\end{cases}$$

Thus, we obtain the characteristic equation  
\begin{equation}\label{eq.char}
    s^q+\alpha s^{q_1}+\beta s^{q_2}+\gamma=0.
\end{equation}

The main contribution of this work is the analysis of the distribution of roots of equation \eqref{eq.char}. Therefore, we consider the complex valued function 
$$\Delta(s;\alpha,\beta,\gamma,q_1,q_2,q)=s^q+\alpha s^{q_1}+\beta s^{q_2}+\gamma.$$

For the particular case $q=q_1+q_2$, the characteristic equation \eqref{eq.char} becomes the characteristic equation corresponding to linear autonomous two-dimensional incommensurate fractional-order differential systems \cite{Brandibur_2018}. Therefore, following a similar proof as in \cite{Brandibur_2018}, we obtain an analogous result for the characterisation of stability and instability properties of equation \eqref{eq.multi.order}, in term of roots of the characteristic function $\Delta(s;\alpha,\beta,\gamma,q_1,q_2,q)$.  

\begin{theorem}$ $
    \begin{enumerate}
        \item[i.] Equation \eqref{eq.multi.order} is $\mathcal{O}(t^{-q'})$-asymptotically stable if and only if all the roots of the characteristic function $\Delta(s;\alpha,\beta,\gamma,q_1,q_2,q)$ are in the open left half-plane ($\Re(s)<0$), where $q'=\min\{\{q_1\},\{q_2\},\{q\}\}$, with $\{q^*\}=q^*-\lfloor q^* \rfloor$, $q^*\in \{q_1,q_2,q\}$.  
        \item[ii.] If $\gamma\ne 0$ and the characteristic function $\Delta(s;\alpha,\beta,\gamma,q_1,q_2,q)$ has at least one root in the open right half-plane ($\Re(s)>0$), equation \eqref{eq.multi.order} is unstable.
    \end{enumerate}
\end{theorem}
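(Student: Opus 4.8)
The plan is to recover the solution $\varphi(t,x_i)$ of \eqref{eq.multi.order} by inverting its Laplace transform and to read off the large-time behaviour from the singularities of that transform; throughout I write $\Delta(s)$ for $\Delta(s;\alpha,\beta,\gamma,q_1,q_2,q)$. From the computation preceding \eqref{eq.char} one has $X(s)=F(s)/\Delta(s)$ with $F$ the data term listed above, so that
$$\varphi(t,x_i)=\frac{1}{2\pi i}\int_{c-i\infty}^{c+i\infty}e^{st}\,\frac{F(s)}{\Delta(s)}\,ds,$$
where the Bromwich abscissa $c$ lies to the right of every singularity of the integrand. Since the powers $s^{q},s^{q_1},s^{q_2}$ are taken on the principal branch, the integrand is analytic on $\mathbb{C}\setminus(-\infty,0]$ away from the zeros of $\Delta$ and carries an algebraic branch point at $s=0$. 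These two sources of singular behaviour — the zeros of $\Delta$ and the branch point at the origin — will respectively generate the exponential and the algebraic parts of $\varphi$.

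First I would deform the Bromwich path onto a Hankel-type contour $\mathcal{H}$ that wraps the negative real semi-axis, collecting by the residue theorem the contributions of all zeros of $\Delta$ that are crossed. The estimates $|\Delta(s)|\sim|s|^{q}$ and $|F(s)|=\mathcal{O}(|s|^{q-1})$ as $|s|\to\infty$ guarantee both that $\Delta$ has only finitely many zeros in the cut plane and that the integrand decays fast enough for the joining circular arcs to drop out in the limit, yielding the decomposition
$$\varphi(t,x_i)=\sum_{\Delta(s_0)=0}\operatorname{Res}_{s=s_0}\!\left[e^{st}\,\frac{F(s)}{\Delta(s)}\right]+\frac{1}{2\pi i}\int_{\mathcal{H}}e^{st}\,\frac{F(s)}{\Delta(s)}\,ds.$$

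Next I would estimate the two pieces separately. A zero $s_0$ of $\Delta$ contributes a residue term of the form $e^{s_0 t}$ times a polynomial in $t$, so a zero with $\Re(s_0)>0$ produces an unbounded summand; checking that it cannot be annihilated by the remaining contributions yields the instability statement (ii), where the hypothesis $\gamma\ne 0$ guarantees $\Delta(0)=\gamma\ne 0$, so that $s=0$ is not a zero and the pole and branch-point analyses stay separate. Conversely, if all zeros lie in the open left half-plane, every residue term decays like $e^{-\delta t}$ for some $\delta>0$, and the asymptotics are dictated by the Hankel integral. Near $s=0$ one has $\Delta(s)\to\gamma$, while the integer powers of $s$ appearing in $F$ are single-valued and contribute nothing across the cut; collapsing $\mathcal{H}$ onto the discontinuity of the integrand shows that the slowest-decaying term is controlled by the least exponent among the fractional powers present in $F$, which equals $q'-1$ with $q'=\min\{\{q_1\},\{q_2\},\{q\}\}$. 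By $\mathcal{L}^{-1}[s^{-\mu}]=t^{\mu-1}/\Gamma(\mu)$ this term decays like $t^{-q'}$ and dominates the remaining, faster-decaying terms, giving $|\varphi(t,x_i)|=\mathcal{O}(t^{-q'})$ and hence the $\mathcal{O}(t^{-q'})$-asymptotic stability in (i). The reverse implication in (i) follows because a zero with $\Re(s_0)\ge 0$ leaves a non-decaying (oscillatory or growing) residue term, which rules out asymptotic stability.

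The step I expect to be the main obstacle is the rigorous treatment of the Hankel integral across the four parameter regimes listed for $F$. Justifying that the arcs vanish and that the zero set of $\Delta$ is finite is routine given the large-$|s|$ behaviour of $\Delta$, but identifying the correct decay exponent requires careful bookkeeping of which fractional powers survive near $s=0$ in the mixed cases where some of $q_1,q_2,q$ exceed $1$ and hence $\{q^*\}\ne q^*$, together with a uniform estimate showing that every other term in the expansion decays strictly faster than $t^{-q'}$. Establishing these estimates, rather than the contour bookkeeping, is where the bulk of the technical work lies, in close parallel with the corresponding argument for two-dimensional incommensurate systems.
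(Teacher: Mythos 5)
The paper never writes out a proof of this theorem: it is stated after observing that for $q=q_1+q_2$ the characteristic equation coincides with that of a planar incommensurate system, and its justification is deferred to the analogous argument in \cite{Brandibur_2018}. Your Laplace-inversion scheme --- Bromwich integral for $X(s)=F(s)/\Delta(s)$, deformation onto a Hankel loop around the cut $(-\infty,0]$, residues at the finitely many zeros of $\Delta$, and extraction of the algebraic decay from the branch point at the origin --- is precisely that cited approach, and your exponent bookkeeping is correct: across the four regimes for $F$, the exponents occurring are $q^*-1$ and $q^*-2$, whose minimum is $q'-1$ with $q'=\min\{\{q_1\},\{q_2\},\{q\}\}$, so the slowest surviving Hankel contribution is of order $t^{-q'}$.

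There is, however, one substantive gap, in part (ii) and in the ``only if'' half of (i): you assign $\gamma\neq 0$ the role of keeping $s=0$ off the zero set, and leave the crucial step --- that the residue at an unstable root ``cannot be annihilated'' --- unargued. In fact $\gamma\neq0$ is exactly what makes that step work. Take initial data with $x_1=0$; then in all four cases listed for $F$ one has $F(s)=x_0\,s^{-1}\left(s^{q}+\alpha s^{q_1}+\beta s^{q_2}\right)=x_0\,s^{-1}\left(\Delta(s)-\gamma\right)$, so at any root $s_0$ of $\Delta$, $F(s_0)=-\gamma x_0/s_0$, which is nonzero precisely because $\gamma\neq0$. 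Hence the residue at a root with $\Re(s_0)>0$ is $e^{s_0t}$ times a polynomial with nonzero leading coefficient (whatever the multiplicity), the corresponding solution is unbounded, and by linear rescaling arbitrarily small data yield unbounded solutions, i.e.\ instability. The same identity is what secures $F(i\omega_0)\neq 0$ at a purely imaginary root in the converse direction of (i), and the residual case $\gamma=0$ --- where $s=0$ is itself a root of $\Delta$ --- needs separate (easy) treatment, since then every constant function solves \eqref{eq.multi.order} and asymptotic stability fails trivially. A smaller point: your large-$|s|$ estimate bounds the zero set but does not by itself exclude accumulation of zeros on the cut; finiteness of the zero set in the cut plane requires the permanence-type argument of Lemma~\ref{lemma.number.roots}(ii), applied after continuing $\Delta$ analytically across the cut from either side, together with the observation that $\Delta(s)\to\gamma\neq0$ as $s\to0$.
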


\section{Fractional-order-dependent stability and instability results}
\label{sec:3}

In this section, we will assume that the fractional orders $q_1,q_2,q$ are arbitrarily
fixed inside the domain 
\[D=\{(q_1,q_2,q)\in\mathbb{R}^3~:~0<q_1<q_2<q\leq 2\}.\]
Moreover, as $\gamma<0$ implies that the equation \eqref{eq.multi.order} is unstable, for any choice of the fractional orders $q_1,q_2,q$ (as it will be shown in section 3.2.), we will further assume that $\gamma>0$.  

\begin{lemma}\label{lem.curve.gamma} Let $(q_1,q_2,q)\in D$ and $\gamma>0$ arbitrarily fixed. Consider the smooth parametric curve in the $(\alpha,\beta)$-plane defined by
	$$
	\Gamma\left(\gamma,q_1,q_2,q\right)~:\quad
	\begin{cases}
		\alpha=\gamma^{1-\frac{q_1}{q}} h\left(\omega,q_1,q_2,q\right)\\
        \beta=\gamma^{1-\frac{q_2}{q}} h\left(\omega,q_2,q_1,q\right)
	\end{cases},\quad \omega>0,
	$$
	where $h:(0,\infty)\times D\rightarrow(0,\infty)$ is given by:
    $$h\left(\omega,q_1,q_2,q\right)=\omega^{-\frac{q_1}{q}}\left[\omega \rho(q-q_2,q_2-q_1)-\rho(q_2,q_2-q_1)\right]$$
with the function $\rho$ defined  as
$$\rho(a,b)=\frac{\sin\frac{a\pi}{2}}{\sin\frac{b\pi}{2}}\quad,~\forall~a\in[0,2],~b\in[-1,0)\cup(0,1].$$
The following statements hold:
\begin{itemize}	
\item[i.] The curve $\Gamma(\gamma,q_1,q_2,q)$ is the graph of a smooth, decreasing, convex bijective function $\phi_{\gamma,q_1,q_2,q}:\mathbb{R}\rightarrow\mathbb{R}$ in the $(\alpha,\beta)$-plane.
\item[ii.] The curve $\Gamma(\gamma,q_1,q_2,q)$ lies outside the first quadrant of the $(\alpha,\beta)$-plane.
\end{itemize}
\end{lemma}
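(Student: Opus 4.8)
The plan is to treat $\alpha$ and $\beta$ as explicit scalar functions of the parameter $\omega>0$ and to read off every assertion from their analytic form. Using the elementary identity $\rho(a,-b)=-\rho(a,b)$, the second component simplifies, so that
$$\alpha(\omega)=\gamma^{1-p_1}\omega^{-p_1}(A\omega-B),\qquad \beta(\omega)=\gamma^{1-p_2}\omega^{-p_2}(D-C\omega),$$
where $p_i=q_i/q\in(0,1)$ with $p_1<p_2$, and $A=\rho(q-q_2,q_2-q_1)$, $B=\rho(q_2,q_2-q_1)$, $C=\rho(q-q_1,q_2-q_1)$, $D=\rho(q_1,q_2-q_1)$. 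First I would record that $A,B,C,D>0$: each is a quotient of two sines whose arguments $\tfrac{\pi}{2}q_1$, $\tfrac{\pi}{2}q_2$, $\tfrac{\pi}{2}(q-q_1)$, $\tfrac{\pi}{2}(q-q_2)$, $\tfrac{\pi}{2}(q_2-q_1)$ all lie strictly in $(0,\pi)$ under $0<q_1<q_2<q\le 2$, hence all the sines are positive.

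For part (i), differentiation and factoring give $\alpha'(\omega)=\gamma^{1-p_1}\omega^{-p_1-1}(A(1-p_1)\omega+Bp_1)>0$ and $\beta'(\omega)=-\gamma^{1-p_2}\omega^{-p_2-1}(C(1-p_2)\omega+Dp_2)<0$ for all $\omega>0$, so $\alpha$ is strictly increasing and $\beta$ strictly decreasing. Inspecting the dominant powers as $\omega\to 0^+$ and as $\omega\to\infty$ yields the limits $-\infty,+\infty$ for $\alpha$ and $+\infty,-\infty$ for $\beta$. Thus $\alpha,\beta:(0,\infty)\to\mathbb{R}$ are smooth bijections with opposite monotonicity, and $\phi_{\gamma,q_1,q_2,q}:=\beta\circ\alpha^{-1}$ is a well-defined smooth bijection of $\mathbb{R}$ onto itself with $\phi'=\beta'/\alpha'<0$, i.e. decreasing.

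Convexity is the one genuinely non-routine point. Since $\alpha'>0$, the sign of $\phi''$ equals that of the curvature numerator $\alpha'\beta''-\beta'\alpha''$; carrying out the differentiations and pulling out the positive factor $\gamma^{2-p_1-p_2}\omega^{-p_1-p_2-3}$ leaves a quadratic $P\omega^2+Q\omega+R$ in $\omega$. Here $P=m\,CA(1-p_1)(1-p_2)$ and $R=m\,BD\,p_1p_2$ with $m:=p_2-p_1\in(0,1)$ are manifestly positive, so the whole question reduces to the sign of the middle coefficient $Q=X(1+m)-Y(1-m)$, where $X=AD\,p_2(1-p_1)$ and $Y=BC\,p_1(1-p_2)$. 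Because $1+m>1-m>0$, it suffices to prove $X>Y$, and this is where the structure pays off: a direct regrouping gives
$$\frac{X}{Y}=\frac{AD}{BC}\cdot\frac{p_2(1-p_1)}{p_1(1-p_2)}=\frac{g(q_1)\,g(q-q_2)}{g(q_2)\,g(q-q_1)},\qquad g(x)=\frac{\sin(\pi x/2)}{\pi x/2}.$$
Since $g(x)=\sin(y)/y$ at $y=\pi x/2$ is strictly decreasing on $(0,\pi)$, and since $q_1<q_2$ and $q-q_2<q-q_1$, both factors exceed $1$, whence $X/Y>1$, $Q>0$, the quadratic is positive on $(0,\infty)$, and $\phi''>0$. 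I expect this reduction of the curvature sign to the monotonicity of $\sin(y)/y$ to be the crux of the argument; everything else is bookkeeping.

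For part (ii), from the explicit forms $\alpha(\omega)>0\iff\omega>B/A$ and $\beta(\omega)>0\iff\omega<D/C$, so the curve meets the open first quadrant exactly when $B/A<D/C$, i.e. when $AD>BC$. I would then show this never happens via a product-to-sum computation of the numerator $\sin\frac{\pi q_2}{2}\sin\frac{\pi(q-q_1)}{2}-\sin\frac{\pi(q-q_2)}{2}\sin\frac{\pi q_1}{2}$, which collapses to
$$BC-AD=\frac{\sin(\pi q/2)}{\sin(\pi(q_2-q_1)/2)}\ge 0,$$
nonnegative because $q\in(0,2]$. Hence $AD\le BC$, the defining $\omega$-intervals for $\alpha>0$ and for $\beta>0$ are disjoint, and $\Gamma(\gamma,q_1,q_2,q)$ lies outside the open first quadrant (meeting the axes only in the degenerate case $q=2$).
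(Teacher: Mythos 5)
Your proposal is correct and follows essentially the same route as the paper's proof: monotonicity of $\phi$ from the signs of $\alpha'(\omega)>0$ and $\beta'(\omega)<0$, convexity from the sign of the curvature numerator $\alpha'\beta''-\beta'\alpha''$, reduced---exactly as in the paper---to the strict decrease of $x\mapsto \sin x/x$ on $(0,\pi)$ (your inequality $X>Y$ is literally the paper's $\dfrac{\rho_1\tilde{\rho_2}}{q_1(q-q_2)}>\dfrac{\tilde{\rho_1}\rho_2}{q_2(q-q_1)}$, and your split $Q=X(1+m)-Y(1-m)$ matches their decomposition into sum and difference terms), while part (ii) rests on the same product-to-sum collapse to $\sin\frac{q\pi}{2}\sin\frac{(q_2-q_1)\pi}{2}$, which you compute directly as $BC-AD$ where the paper argues by contradiction. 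Your write-up is in fact slightly more careful on two points the paper glosses over: the explicit limits showing $\alpha,\beta$ are bijections of $(0,\infty)$ onto $\mathbb{R}$, and the boundary case $q=2$, where $BC=AD$ and the curve touches the origin but still avoids the open first quadrant.
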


\begin{proof}$ $ 
\emph{Proof of statement (i).} 
For simplicity, we denote
\begin{align*}
    & \rho_1=\rho(q_1,q_2-q_1),\quad\Tilde{\rho_1}=\rho(q-q_1,q_2-q_1)\\
    & \rho_2=\rho(q_2,q_2-q_1),\quad\Tilde{\rho_2}=\rho(q-q_2,q_2-q_1)
\end{align*}
A simple computation shows that \begin{align*}
    & \dfrac{\partial h}{\partial \omega}(\omega,q_1,q_2,q)=\dfrac{\omega^{-\frac{q_1}{q}-1}}{q}\left[ \omega\left(q-q_1\right)\tilde{\rho_2}+q_1\rho_2 \right]\\
    & \dfrac{\partial h}{\partial \omega}(\omega,q_2,q_1,q)=-\dfrac{\omega^{-\frac{q_2}{q}-1}}{q} \left[ \omega\left(q-q_2\right)\tilde{\rho_1}+q_2\rho_1 \right]
\end{align*}
As $q-q_1>0$, $q-q_2>0$ and $\rho_1,\rho_2,\tilde{\rho_1},\tilde{\rho_2}>0$, we have that $\dfrac{\partial h}{\partial \omega}(\omega,q_1,q_2,q)>0$ and $\dfrac{\partial h}{\partial \omega}(\omega,q_2,q_1,q)<0$  for any $\omega>0$.

Therefore, the real-valued function $\omega \mapsto h(\omega,q_1,q_2,q)$ is bijective and monotonous on $\mathbb{R}$: increasing if $q_2<q_1$ and decreasing otherwise. As $q_2>q_1$, it follows that the curve $\Gamma(\gamma,q_1,q_2,q)$ is the graph of a smooth decreasing bijective function $\phi_{\gamma,q_1,q_2,q}$ in the $(\alpha,\beta)$-plane.

Furthermore, by a similar reasoning:
\begin{align*}
    & \dfrac{\partial \alpha}{\partial \omega}=\dfrac{\gamma^{1-\frac{q_1}{q}}\omega^{-\frac{q_1}{q}-1}}{q}[\omega(q-q_1)\tilde{\rho_2}+q_1\rho_2]>0 &\\
    & \dfrac{\partial \beta}{\partial \omega}=-\dfrac{\gamma^{1-\frac{q_2}{q}}\omega^{-\frac{q_2}{q}-1}}{q}[\omega(q-q_2)\tilde{\rho_1}+q_2\rho_1]<0 &
\end{align*}
and hence, it follows that  $\dfrac{\partial\beta}{\partial \alpha}=\dfrac{\partial \beta} {\partial \omega}\cdot\dfrac{\partial \omega}{\partial \alpha}<0$, which implies that the function $\phi_{\gamma,Q_1,Q_2,q}$ is decreasing.

For the second order derivatives, we have
\begin{align*}
 & \dfrac{\partial^2 \alpha}{\partial \omega^2}=\!-\dfrac{q_1\gamma^{1-\frac{q_1}{q}}\omega^{-\frac{q_1}{q}-2}}{q^2}\left[ \omega (q-q_1)\tilde{\rho_2}+(q+q_1)\rho_2 \right]<0 &\\
 & \dfrac{\partial^2 \beta}{\partial \omega^2}=\!\dfrac{q_2\gamma^{1-\frac{q_2}{q}}\omega^{-\frac{q_2}{q}-2}}{q^2}\left[ \omega (q-q_2)\tilde{\rho_1}+(q+q_2)\rho_2 \right]>0  &
\end{align*}
In order to study the convexity of the curve $\Gamma(\gamma,q_1,q_2.q)$, we determine the sign of $\dfrac{\partial^2\beta}{\partial \alpha^2}$, where
$$\dfrac{\partial^2\beta}{\partial \alpha^2}=\dfrac{\partial}{\partial \alpha}\left( \dfrac{\partial \beta}{\partial \alpha} \right)=\dfrac{\dfrac{\partial^2\beta}{\partial\omega^2}\cdot \dfrac{\partial\alpha}{\partial\omega}-\dfrac{\partial^2\alpha}{\partial \omega^2}\cdot \dfrac{\partial \beta}{\partial\omega}}{\left( \dfrac{\partial\alpha}{\partial \omega} \right)^3}$$
As $ \dfrac{\partial\alpha}{\partial \omega}>0$, the convexity of curve $\Gamma(\gamma,q_1,q_2.q)$ is determined by the sign of the numerator of the previous expression. Indeed, we obtain:
\begin{align*}
    &\dfrac{\partial^2\beta}{\partial\omega^2}\cdot \dfrac{\partial\alpha}{\partial\omega}-\dfrac{\partial^2\alpha}{\partial \omega^2}\cdot \dfrac{\partial \beta}{\partial\omega}=\dfrac{(q_2-q_1)\gamma^{2-\frac{q_1+q_2}{q}}\omega^{-3-\frac{q_1+q_2}{q}}}{q^3}\left[ \omega^2(q-q_1)(q-q_2)\tilde{\rho_1}\tilde{\rho_2}+q_1q_1\rho_1\rho_2 \right]+\\
    &~~~+\dfrac{\gamma^{2-\frac{q_1+q_2}{q}}\omega^{-2-\frac{q_1+q_2}{q}}}{q^3}\left[ q_2(q_2-q_1)(q_2-q_1+q)\rho_1\tilde{\rho_2}+q_1(q-q_2)(q_2-q_1-q)\tilde{\rho_1}\rho_2 \right]
\end{align*}
It is easy to see that the first addend of this sum is positive. On the other hand, for the second addend, we obtain
\begin{align*}
    &q_2(q-q_1)(q_2-q_1+q)\rho_1\tilde{\rho_2}+q_1(q-q_2)(q_2-q_1-q)\tilde{\rho_1}\rho_2=\\
    &~~~~~=q_1q_2(q-q_1)(q-q_2)\left[ (q_2-q_1)\left( \dfrac{\rho_1\tilde{\rho_2}}{q_1(q-q_2)}+\dfrac{\tilde{\rho_1}\rho_2}{q_2(q-q_1)} \right)-q\left( \dfrac{\rho_1\tilde{\rho_2}}{q_1(q-q_2)}-\dfrac{\tilde{\rho_1}\rho_2}{q_2(q-q_1)} \right) \right]
\end{align*}
It is clear that $\dfrac{\rho_1\tilde{\rho_2}}{q_1(q-q_2)}+\dfrac{\tilde{\rho_1}\rho_2}{q_2(q-q_1)}>0$.

Moreover, as the function $x\mapsto\dfrac{\sin x}{x}$ is decreasing on $(0,\pi)$ and $q_1<q_2$, we have that $q-q_1>q-q_2$ and hence
$$\dfrac{\sin \frac{q_1\pi}{2}}{\frac{q_1\pi}{2}}>\dfrac{\sin\frac{q_2\pi}{2}}{\frac{q_2\pi}{2}}\quad \text{and}\quad \dfrac{\sin \frac{(q-q_2)\pi}{2}}{\frac{(q-q_2)\pi}{2}}>\dfrac{\sin \frac{(q-q_1)\pi}{2}}{\frac{(q-q_1)\pi}{2}}$$
Multiplying the previous two inequalities, it follows that
$$\dfrac{\sin \frac{q_1\pi}{2}}{\frac{q_1\pi}{2}}\cdot \dfrac{\sin \frac{(q-q_2)\pi}{2}}{\frac{(q-q_2)\pi}{2}}>\dfrac{\sin \frac{q_2\pi}{2}}{\frac{q_2\pi}{2}}\cdot \dfrac{\sin \frac{(q-q_1)\pi}{2}}{\frac{(q-q_1)\pi}{2}}.$$
which leads to $$\dfrac{\tilde{\rho_2}}{q-q_2}\cdot \dfrac{\rho_1}{q_1}-\dfrac{\rho_2}{q_2}\cdot \dfrac{\tilde{\rho_1}}{q-q_1}>0.$$
It finally results that
$$q_2(q-q_1)(q_2-q_1+q)\rho_1\tilde{\rho_2}+q_1(q-q_2)(q_2-q_1-q)\tilde{\rho_1}\rho_2>0,$$
and therefore, $\dfrac{\partial^2\beta}{\partial\alpha^2}>0$, meaning that the function  $\phi_{\gamma,q_1,q_2,q}$ is convex.

\emph{Proof of statement (ii).} Assuming the contrary, i.e. there exists $\omega>0$ such that $\alpha>0$ and $\beta>0$, we obtain
$$\begin{cases} 
& \omega \rho(q-q_2,q_2-q_1)>\rho(q_2.q_2-q_1)\\
& \omega \rho(q-q_1,q_1-q_2)>\rho(q_1.q_1-q_2)
\end{cases}$$
which implies
\begin{equation}\label{eq.trig}
    \dfrac{\sin (q-q_2)\frac{\pi}{2}}{\sin (q-q_1)\frac{\pi}{2}}>\dfrac{\sin q_2\frac{\pi}{2}}{\sin q_1\frac{\pi}{2}}.
\end{equation}
Using elementary trigonometric identities, it follows that inequality \eqref{eq.trig} is equivalent to the following inequality $$2\sin \frac{q\pi}{2}\sin(q_2-q_1)\frac{\pi}{2}<0,$$ which is absurd as $\sin \frac{q\pi}{2}>0$ and $\sin (q_2-q_1)\frac{\pi}{2}>0$.

Hence, the curve $\Gamma(\gamma,q_1,q_2,q)$ does not have any points in the first quadrant of the $(\alpha,\beta)$-plane.
\end{proof}

\begin{rem}
    If $q_1=q_2=:q^*$, $\Gamma(\gamma,q_1,q_2,q)$ represents the line
    $$\alpha+\beta=-\dfrac{\gamma^{1-\frac{q^*}{q}}\sin\dfrac{q\pi}{2}}{\left(\sin\dfrac{q^*\pi}{2}\right)^{\frac{q^*}{q}}\left(\sin\dfrac{(q-q^*)\pi}{2}\right)^{1-\frac{q^*}{q}}}.$$
\end{rem}

In the following, we will denote by $N(\alpha,\beta,\gamma,q_1,q_2,q)$ the number of unstable roots ($\Re(s)\geq 0$) of the characteristic function $\Delta(s;\alpha,\beta,\gamma,q_1,q_2,q)$, including their multiplicities. The next lemma is concerned with the well-definedness of the function $N(\alpha,\beta,\gamma,q_1,q_2,q)$, as well as several of its properties which will be used to obtain the main results of this paper. 

\begin{lemma}\label{lemma.number.roots} Let $\gamma>0$ and consider arbitrarily fixed fractional orders $0<q_1<q_2<q\leq 2$.
\begin{itemize}
\item[i.] There exist a strictly decreasing function $l_{\gamma,q_1,q_2,q}:\mathbb{R}^+\rightarrow\mathbb{R}^+$ and a strictly increasing function $L_{\gamma,q_1,q_2,q}:\mathbb{R}^+\rightarrow\mathbb{R}^+$ such that any unstable root of $\Delta(s;\alpha,\beta,\gamma,q_1,q_2,q)$ is bounded by
\begin{equation}\label{ineq.s}
    l_{\gamma,q_1,q_2,q}(|\alpha|+|\beta|)\leq |s|\leq L_{\gamma,q_1,q_2,q}(|\alpha|+|\beta|).
\end{equation}

\item[ii.] For the characteristic function $\Delta(s;\alpha,\beta,\gamma,q_1,q_2,q)$ there exist at most a finite number of roots such that $\Re(s)\geq 0$. 

\item[iii.]  The function $(\alpha,\beta)\mapsto N(\alpha,\beta,\gamma,q_1,q_2,q)$ is continuous at all points $(\alpha,\beta)$ that do not belong to the curve $\Gamma(\gamma,q_1,q_2,q)$. Therefore, $N(\alpha,\beta,\gamma,q_1,q_2,q)$ is constant on each connected component of $\mathbb{R}^2\setminus \Gamma(\gamma,q_1,q_2,q)$.
\end{itemize}
\end{lemma}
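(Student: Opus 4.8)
The plan is to treat the three statements in the stated order, since (ii) rests on the modulus estimates of (i), and (iii) combines both with an argument-principle computation, which is where the real work lies.

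\emph{Statement (i).} The starting point is that a root of $\Delta$ satisfies simultaneously
$$s^q=-(\alpha s^{q_1}+\beta s^{q_2}+\gamma)\qquad\text{and}\qquad \gamma=-(s^q+\alpha s^{q_1}+\beta s^{q_2}).$$
Writing $r=|s|$ and applying the triangle inequality to each identity yields
$$r^q\le |\alpha|\,r^{q_1}+|\beta|\,r^{q_2}+\gamma \qquad\text{and}\qquad \gamma\le r^q+|\alpha|\,r^{q_1}+|\beta|\,r^{q_2}.$$
Since $0<q_1<q_2<q$, for $r\ge 1$ the first inequality forces $r^{q-q_2}\le |\alpha|+|\beta|+\gamma$, while for $r\le 1$ the second forces $(1+|\alpha|+|\beta|)\,r^{q_1}\ge\gamma$ (here the hypothesis $\gamma>0$ enters essentially). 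From these I would read off explicit elementary functions of the single variable $|\alpha|+|\beta|$: a strictly increasing $L_{\gamma,q_1,q_2,q}$ and a strictly decreasing $l_{\gamma,q_1,q_2,q}$ trapping every root, a fortiori every unstable root, in the annulus \eqref{ineq.s}. Monotonicity is immediate from the formulas, and the limiting behaviour as $r\to\infty$ and $r\to 0^+$ guarantees the confinement.

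\emph{Statement (ii).} By (i), every root with $\Re(s)\ge 0$ lies in the compact set $K=\{\Re(s)\ge 0,\ l\le|s|\le L\}$, where $l,L$ are evaluated at the fixed value of $|\alpha|+|\beta|$. Since $K$ is bounded away from the origin and from the negative real axis, the function $\Delta$, built from the principal branches of the power functions, is analytic on an open neighbourhood of $K$ and is not identically zero there. An analytic function that is not identically zero has only isolated zeros, and a compact set contains finitely many isolated points; hence $K$, and therefore the closed right half-plane, contains only finitely many roots.

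\emph{Statement (iii).} This is the heart of the lemma. Fix $(\alpha_0,\beta_0)\notin\Gamma(\gamma,q_1,q_2,q)$ and count the unstable roots by the argument principle on a $D$-shaped contour $\mathcal{C}_R$: the segment of the imaginary axis from $-iR$ to $iR$, indented by a small semicircle of radius $\varepsilon<l$ into the right half-plane around the branch point $s=0$, and closed by the right semicircle $|s|=R$ with $R>L$. Then
$$N(\alpha,\beta,\gamma,q_1,q_2,q)=\frac{1}{2\pi i}\oint_{\mathcal{C}_R}\frac{\Delta'(s)}{\Delta(s)}\,ds,$$
provided $\Delta$ does not vanish on $\mathcal{C}_R$. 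On the large semicircle this holds since $R>L$; on the indentation since $\Delta(0)=\gamma\ne 0$; and on the imaginary axis exactly when $\Delta$ has no purely imaginary root. The crucial observation is that $\Gamma(\gamma,q_1,q_2,q)$ is precisely the locus of parameters for which $\Delta$ admits a purely imaginary root: substituting $s=i\omega'$ ($\omega'>0$) into $\Delta=0$ and separating real and imaginary parts gives a linear system in $(\alpha,\beta)$ whose determinant equals $(\omega')^{q_1+q_2}\sin\frac{(q_2-q_1)\pi}{2}\ne 0$, so each $\omega'$ produces a unique point, and letting $\omega'$ sweep $(0,\infty)$ traces out $\Gamma$ after the reparametrisation implicit in its definition. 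Consequently, for $(\alpha,\beta)$ off $\Gamma$ the integrand is continuous in $(\alpha,\beta)$ (indeed $\Delta$ and $\Delta'$ are affine in $\alpha,\beta$) and never blows up on $\mathcal{C}_R$, so the integer-valued integral is locally constant and hence constant on each connected component of $\mathbb{R}^2\setminus\Gamma$.

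The main obstacle is statement (iii). The delicate points are: verifying rigorously that $\Gamma$ coincides with the purely-imaginary-root locus, so that the contour stays root-free off $\Gamma$; handling the branch point at the origin and the cut along the negative real axis when invoking analyticity and the argument principle; and exploiting the uniform annulus bounds of (i) to ensure that, as $(\alpha,\beta)$ ranges over a neighbourhood, no root escapes to infinity or collapses into the origin, so that a single contour $\mathcal{C}_R$ serves the entire neighbourhood. Once these are secured, the continuous dependence of the zeros of an analytic family on parameters, together with the integer-valuedness of the winding number, completes the argument.
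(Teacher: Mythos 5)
Your proposal is correct, and its skeleton coincides with the paper's: parts (i) and (ii) are proved the same way in both. For (i), the paper also splits into the regimes $|s|>1$ and $|s|\leq 1$ and applies the triangle inequality, but phrases the bounds implicitly through the unique positive root $x^*(A,B,\varepsilon)$ of $x-Ax^{\varepsilon}-B=0$, whereas you read off simpler explicit power bounds ($r^{q-q_2}\leq|\alpha|+|\beta|+\gamma$ and $(1+|\alpha|+|\beta|)r^{q_1}\geq\gamma$); both are valid, though your two one-sided bounds still need the small patch that makes them globally strictly monotone across the regimes (the paper handles this with its ``$+1$'' adjustment, which forces the upper bound above $1$ and the lower bound below $1$). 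Part (ii) is in both cases compactness plus the identity theorem for $\Delta$ analytic on $\mathbb{C}\setminus\mathbb{R}^-$; your phrase ``a compact set contains finitely many isolated points'' should be tightened to ``an infinite zero set in a compact subset of the domain would have an accumulation point there, contradicting the identity theorem,'' which is exactly the paper's Bolzano--Weierstrass argument. The one substantive divergence is in (iii): the paper fixes a ball $B_r(v_0)$, evaluates $l,L$ at $r+\|v_0\|_1$ so a single right-half-annulus region $R$ traps all unstable roots for the whole neighbourhood, and then runs an explicit Rouch\'e estimate on $\partial R$, using $d_0=\min_{s\in(C)}|\Delta(s;\alpha_0,\beta_0)|>0$ together with $|\Delta(s;v)-\Delta(s;v_0)|\leq\|v-v_0\|_1\,\|(L^{q_1},L^{q_2})\|_{\infty}<d_0$, exploiting that $\Delta$ is affine in $(\alpha,\beta)$; you instead invoke continuity of the integer-valued argument-principle integral over a D-shaped contour. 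These are interchangeable mechanisms (Rouch\'e is the argument principle in disguise), and you correctly identify the two load-bearing points: a single contour must serve the whole neighbourhood (your use of the uniform annulus bounds), and the contour must be root-free off $\Gamma$. On the latter point your write-up is actually more complete than the paper's: you verify inline, via the $2\times2$ linear system in $(\alpha,\beta)$ with determinant $(\omega')^{q_1+q_2}\sin\frac{(q_2-q_1)\pi}{2}\neq 0$, that $\Gamma(\gamma,q_1,q_2,q)$ is precisely the purely-imaginary-root locus, a fact the paper's proof of this lemma uses tacitly when asserting $\Delta(s;\alpha_0,\beta_0,\gamma,q_1,q_2,q)\neq 0$ on its contour, and only establishes explicitly later, in the proof of statement (i) of Theorem \ref{thm.q}.
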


\begin{proof}
\emph{Proof of statement (i).} 
Considering the function $f(x;A,B,\varepsilon)=x-Ax^{\varepsilon}-B$, with $A,B>0$, $\varepsilon\in (0,1)$, and denoting by $x^*(A,B,\varepsilon)$ the unique positive root of the equation $f(x,A,B,\varepsilon)=0$, we can easily prove that $\dfrac{\partial x^*}{\partial A}>0$.
Moreover, as $f(\infty)=\infty$, we have that $f(x)>0$, for any $x>x^*$, where the arguments were dropped for simplicity.

Let $s$ be a root of the characteristic function $\Delta(s;\alpha,\beta,\gamma,q_1,q_2,q)$, i.e. 
\[s^q+\alpha s^{q_1}+\beta s^{q_2}+\gamma=0.\]

If $|s|>1$, then we obtain 

$$|s|^q\leq |\alpha||s|^{q_1}+|\beta||s|^{q_2}+\gamma\leq (|\alpha|+|\beta|)|s|^{q_2}+\gamma.$$
Denoting $|s|^q=\eta>1$ and substituting in the previous inequality, we have: $$\eta\leq (|\alpha|+|\beta|)\eta^{\frac{q_2}{q}}+\gamma,$$
providing that $$f\left(\eta;|\alpha|+|\beta|,\gamma,\dfrac{q_2}{q}\right)\leq 0,$$ which is equivalent to $$\rho\leq x^*\left(|\alpha|+|\beta|,\gamma,\dfrac{q_2}{q}\right)<x^*\left(|\alpha|+|\beta|,\gamma,\dfrac{q_2}{q}\right)+1.$$
Therefore, $|s|<\left( x^*\left(|\alpha|+|\beta|,\gamma,\dfrac{q_2}{q}\right)+1 \right)^{\frac{1}{q}}$.

On the other hand, if $|s|\leq 1$, we have: 
$$\gamma |s|^{-q}\leq 1+|\alpha||s|^{q_1-q}+|\beta||s|^{q_2-q}.$$

Denoting $|s|^{-q}=\eta$, it results that $$\eta\leq \dfrac{1}{\gamma}+\dfrac{|\alpha|+|\beta|}{\gamma}\eta^{1-\frac{q_1}{q}},$$
and hence: $$f\left( \eta;\dfrac{|\alpha|+|\beta|}{\gamma},\dfrac{1}{\gamma},1-\dfrac{q_1}{q} \right)\leq 1,$$ which is equivalent to $$\eta\leq x^*\left( \dfrac{|\alpha|+|\beta|}{\gamma},\dfrac{1}{\gamma},1-\dfrac{q_1}{q} \right)<x^*\left( \dfrac{|\alpha|+|\beta|}{\gamma},\dfrac{1}{\gamma},1-\dfrac{q_1}{q} \right)+1.$$
Thus, $|s|\geq \left( x^*\left( \dfrac{|\alpha|+|\beta|}{\gamma},\dfrac{1}{\gamma},1-\dfrac{q_1}{q} \right)+1 \right)^{-\frac{1}{q}}$.

In conclusion, chosing the strictly decreasing function $l_{\gamma,q_1,q_2,q}:\mathbb{R}^+\to\mathbb{R}^+$ given by $$l_{\gamma,q_1,q_2,q}(|\alpha|+|\beta|)=\left( x^*\left( \dfrac{|\alpha|+|\beta|}{\gamma},\dfrac{1}{\gamma},1-\dfrac{q_1}{q} \right)+1 \right)^{-\frac{1}{q}}$$ and the strictly increasing function $L_{\gamma,q_1,q_2,q}:\mathbb{R}^+\to\mathbb{R}^+$ given by $$l_{\gamma,q_1,q_2,q}(|\alpha|+|\beta|)=\left( x^*\left(|\alpha|+|\beta|,\gamma,\dfrac{q_2}{q}\right)+1 \right)^{\frac{1}{q}},$$ we obtain inequality \eqref{ineq.s}.

\emph{Proof of statement (ii).} By \textit{reductio ad absurdum}, if the characteristic function $\Delta(s;\alpha,\beta,\gamma,q_1,q_2,q)$ has an infinite number of unstable roots, by the Bolzano-Weierstrass theorem, we can find a convergent sequence of unstable roots $(s_n)$, such that $s_n\rightarrow s_0$ when $n\rightarrow\infty$. As $\gamma>0$, it follows that $s_0\neq 0$ and $\Re(s_0)\geq 0.$ The function $\Delta(s;\alpha,\beta,\gamma,q_1,q_2,q)$ is analytic on $\mathbb{C}\setminus \mathbb{R}^{-}$ and, from the principle of permanence, we have that it is identically zero, which contradicts our claim. Hence, $N(\alpha,\beta,\gamma,q_1,q_2,q)$ is finite.

\emph{Proof of statement (iii).} Let $v_0=(\alpha_0,\beta_0)\in\mathbb{R}^2\setminus \Gamma (\gamma,q_1,q_2,q)$ and $r>0$ such that the open neighborhood of the point $a_0$ $$B_r(v_0)=\{ v=(\alpha,\beta)\in\mathbb{R}^2\ : \ \|v-v_0\|_1<r \}$$ is included in the region $\mathbb{R}^2\setminus \Gamma (\gamma,q_1,q_2,q)$, where $\|v\|_1=|\alpha|+|\beta|$.

Then, for any point $v=(\alpha,\beta)\in B_r(v_0)$, we have $$\|v\|_1\leq \|v-v_0\|_1+\|v_0\|_1<r+\|v_0\|_1,$$ an therefore, any root $s$ of the characteristic function $\Delta(s;\alpha,\beta,\gamma,q_1,q_2,q)$ with $\Re(s)\geq 0$ satisfies the inequality $$l_{\gamma,q_1,q_2,q}(r+\|v_0\|_1)<|s|<L_{\gamma,q_1,q_2,q}(r+\|v_0\|_1).$$
We denote $l=l_{\gamma,q_1,q_2,q}(r+\|v_0\|_1)$ and $L=L_{\gamma,q_1,q_2,q}(r+\|v_0\|_1)$ and consider the closed curve $(C)$ in the complex plane, which is oriented counterclockwise and bounds the open set $$R=\{s\in\mathbb{C}\ :\ \Re(s)>0,\ l<|s|<L \}.$$

Therefore, for any point $v=(\alpha,\beta)\in B_r(v_0)$, all unstable roots of the function $\Delta(s;\alpha,\beta,\gamma,q_1,q_2,q)$ (i.e. $\Re(s)\geq 0$) are inside the set $R$.

Because $\Delta(s;\alpha_0,\beta_0,\gamma,q_1,q_2,q)\ne 0$, for any $s$ on the considered closed curve, we can easily see that $$d_0=\min\limits_{s\in (C)}|\Delta(s;\alpha_0,\beta_0,\gamma,q_1,q_2,q)|>0.$$

Next, we consider $$r'=\min \left \{ r,\dfrac{d_0}{\|(L^{q_1},L^{q_2})\|_{\infty}} \right \}.$$

Then, for any $s\in (C)$ and $a\in B_{r'}(v_0)\subset B_r(v_0)$, by applying H\"{o}lder's inequality, it follows that

\begin{align*}
    |\Delta & (s;\alpha,\beta,\gamma,q_1,q_2,q)-\Delta(s;\alpha_0,\beta_0,\gamma,q_1,q_2,q)|=\\
    &=|(\alpha-\alpha_0)s^{q_1}+(\beta-\beta_0)s^{q_2}|\leq\\
    &\leq|\alpha-\alpha_0|L^{q_1}+|\beta-\beta_0|L^{q_2}\leq \\
    &\leq \|v-v_0\|_1\cdot \|(L^{q_1},L^{q_2})\|_{\infty}<\\
    &<r'\|(L^{q_1},L^{q_2})\|_{\infty}\leq d_0\leq |\Delta(s;\alpha_0,\beta_0,\gamma,q_1,q_2,q)|
\end{align*}

Therefore, by Rouch\'{e}'s theorem, we have that the functions $\Delta(s;\alpha,\beta,\gamma,q_1,q_2,q)$ and $\Delta(s;\alpha_0,\beta_0,\gamma,q_1,q_2,q)$ have the same number of roots in the set $R$.

Thus, $N(\alpha,\beta,\gamma,q_1,q_2,q)=N(\alpha_0,\beta_0,\gamma,q_1,q_2,q)$, for any $a\in B_{r'}(v_0)$ and therefore, the function $(\alpha,\beta)\mapsto N(\alpha,\beta,\gamma,q_1,q_2,q)$ is continuous on the domain $\mathbb{R}^2\setminus \Gamma(\gamma,q_1,q_2,q)$ and, because it is an integer-valued function, we obtain that it takes constant values on every connected component of $\mathbb{R}^2\setminus \Gamma(\gamma,q_1,q_2,q)$.
\end{proof}

With these preliminary lemmas, we now present the main result of this section. 

\begin{theorem}[Fractional-order-dependent stability and instability results]\label{thm.q}$ $

Let $\gamma>0$, $0<q_1<q_2< q\leq 2$ arbitrarily fixed. Consider the curve $\Gamma(\gamma,q_1,q_2,q)$ and the function $\phi_{\gamma,q_1,q_2,q}:\mathbb{R}\rightarrow\mathbb{R}$ defined in Lemma \ref{lem.curve.gamma}.
\begin{itemize}
\item[i.] The characteristic equation \eqref{eq.char} has a pair of complex conjugated roots on the imaginary axis of the complex plane if and only if $(\alpha,\beta)\in \Gamma(\gamma;q_1,q_2,q)$.
			
\item[ii.] The trivial solution of equation (\ref{eq.multi.order}) is $\mathcal{O}(t^{-q'})$-asymptotically stable (where $q'=\min\{\{q\},\{q_1\},\{q_2\}\}$) if and only if $$\beta>\phi_{\gamma,q_1,q_2,q}(\alpha).$$

\item[iii.] If $\beta<\phi_{\gamma,q_1,q_2,q}(\alpha)$, the trivial solution of equation (\ref{eq.multi.order}) is unstable.
\end{itemize}
\end{theorem}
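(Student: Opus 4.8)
The plan is to dispose of statement (i) by a direct computation and then derive statements (ii)--(iii) from a connectedness argument built on Lemmas \ref{lem.curve.gamma} and \ref{lemma.number.roots} together with the root-location criterion for stability recalled in the preliminary theorem of Section 2. For statement (i), I would first note that since $\gamma>0$ the value $s=0$ is never a root of $\Delta$, so a purely imaginary root must be of the form $s=\pm i\nu$ with $\nu>0$, and these occur in conjugate pairs because the coefficients are real. Substituting $s=i\nu=\nu e^{i\pi/2}$ and using $(i\nu)^{q^*}=\nu^{q^*}(\cos\frac{q^*\pi}{2}+i\sin\frac{q^*\pi}{2})$, the equation $\Delta(i\nu)=0$ separates, through its real and imaginary parts, into a linear system in $(\alpha,\beta)$:
\begin{align*}
\alpha\,\nu^{q_1}\sin\tfrac{q_1\pi}{2}+\beta\,\nu^{q_2}\sin\tfrac{q_2\pi}{2}&=-\nu^{q}\sin\tfrac{q\pi}{2},\\
\alpha\,\nu^{q_1}\cos\tfrac{q_1\pi}{2}+\beta\,\nu^{q_2}\cos\tfrac{q_2\pi}{2}&=-\nu^{q}\cos\tfrac{q\pi}{2}-\gamma.
\end{align*}
Its determinant equals $-\nu^{q_1+q_2}\sin\frac{(q_2-q_1)\pi}{2}$, which is nonzero since $q_2-q_1\in(0,2)$; hence there is a unique $(\alpha,\beta)$ for each $\nu>0$. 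Solving by Cramer's rule, simplifying the numerators with the subtraction formula $\sin A\cos B-\cos A\sin B=\sin(A-B)$, and then performing the substitution $\omega=\nu^{q}/\gamma$ (a positive bijection of $(0,\infty)$ onto itself), I expect the expressions to collapse, after factoring out $\gamma$, into exactly $\alpha=\gamma^{1-q_1/q}h(\omega,q_1,q_2,q)$ and $\beta=\gamma^{1-q_2/q}h(\omega,q_2,q_1,q)$ of Lemma \ref{lem.curve.gamma}, establishing the claimed equivalence with $(\alpha,\beta)\in\Gamma(\gamma,q_1,q_2,q)$.

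For statements (ii) and (iii), I would use that, by Lemma \ref{lem.curve.gamma}, $\Gamma$ is the graph of the continuous function $\phi_{\gamma,q_1,q_2,q}$, so $\mathbb{R}^2\setminus\Gamma$ has exactly two connected components, namely $\{\beta>\phi(\alpha)\}$ and $\{\beta<\phi(\alpha)\}$, on each of which $N$ is constant by Lemma \ref{lemma.number.roots}(iii). It then suffices to evaluate $N$ at one convenient point of each. On $\{\beta>\phi(\alpha)\}$ I would test an arbitrary point of the open first quadrant, which is legitimate because Lemma \ref{lem.curve.gamma}(ii) gives $\phi(\alpha)\le 0$ for every $\alpha>0$, so the open first quadrant is contained in that component. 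For $\alpha,\beta,\gamma>0$ I would then show directly that $\Delta$ has no root with $\Re(s)\ge 0$: writing $s=|s|e^{i\theta}$ with $\theta\in[-\tfrac{\pi}{2},\tfrac{\pi}{2}]$, the imaginary part of $\Delta(s)$ is $|s|^{q}\sin(q\theta)+\alpha|s|^{q_1}\sin(q_1\theta)+\beta|s|^{q_2}\sin(q_2\theta)$, and for $\theta\in(0,\tfrac{\pi}{2}]$ every sine is nonnegative (as $q\theta\le\pi$ and $q_1\theta,q_2\theta<\pi$) with the $\alpha$- and $\beta$-terms strictly positive, so the imaginary part cannot vanish; the range $\theta\in[-\tfrac{\pi}{2},0)$ is symmetric, while $\theta=0$ gives a strictly positive real value. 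Hence $N\equiv0$ on $\{\beta>\phi(\alpha)\}$, and the root-location criterion yields $\mathcal{O}(t^{-q'})$-asymptotic stability whenever $\beta>\phi(\alpha)$.

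On $\{\beta<\phi(\alpha)\}$ I would test a point $(\alpha,0)$ with $\alpha$ large and negative, which lies in this component because $\phi(\alpha)\to+\infty$ as $\alpha\to-\infty$. There the restriction of $\Delta$ to the positive real axis is $g(x)=x^{q}+\alpha x^{q_1}+\gamma$, which satisfies $g(0)=\gamma>0$, $g(+\infty)=+\infty$, and attains a strictly negative minimum once $|\alpha|$ is large enough; by the intermediate value theorem $g$ has a positive real root, so $N\ge1$ throughout $\{\beta<\phi(\alpha)\}$. Since such points are off $\Gamma$, statement (i) guarantees that no root lies on the imaginary axis, so these $N\ge1$ unstable roots satisfy $\Re(s)>0$ and the instability part of the preliminary theorem applies, proving statement (iii). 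The converse assertion in (ii) then follows by exclusion: asymptotic stability forces all roots into $\Re(s)<0$, hence $N=0$ and, by (i), $(\alpha,\beta)\notin\Gamma$, which rules out the component $\{\beta<\phi(\alpha)\}$ and places $(\alpha,\beta)$ in $\{\beta>\phi(\alpha)\}$.

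I expect the main obstacle to be the algebra in statement (i): executing Cramer's rule, reducing the two numerators to single sine factors, and checking that the change of variable $\omega=\nu^{q}/\gamma$ matches the precise normalisation encoded in the definitions of $h$ and $\rho$. The remaining care points are topological and essentially routine -- confirming that $\Gamma$ splits the plane into exactly two components and that each chosen test point genuinely lies in the intended component for every admissible $q\in(q_2,2]$, including the boundary value $q=2$, where the first-quadrant imaginary-part estimate must stay strict even though $\sin(q\theta)$ may vanish.
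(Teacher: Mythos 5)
Your proposal is correct, and it diverges from the paper's proof in part (iii) in a genuinely interesting way. For part (i) you do exactly what the paper does (substitute a purely imaginary root, split into real and imaginary parts, and solve the resulting linear system for $(\alpha,\beta)$); your explicit Cramer's-rule setup with the nonvanishing determinant $-\nu^{q_1+q_2}\sin\frac{(q_2-q_1)\pi}{2}$ is a welcome justification that the paper leaves implicit, and your substitution $\omega=\nu^{q}/\gamma$ is precisely the paper's normalisation $s=i(\gamma\omega)^{1/q}$, so the algebra does collapse to $h$ as you expect. For part (ii) both arguments test a point of the open first quadrant and invoke the constancy of $N$ from Lemma \ref{lemma.number.roots}(iii); the paper tests $(1,1)$ and rules out unstable roots by dividing the characteristic equation by $s^{q/2}$ and summing real parts, while you use positivity of the imaginary part of $\Delta(s)$ for $\arg s\in(0,\pi/2]$ — equivalent elementary devices, and you rightly note the strictness at $q=2$, $\theta=\pi/2$ is carried by the $\alpha$- and $\beta$-terms. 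The real difference is part (iii): the paper runs a transversality argument, implicitly differentiating the root along $\Gamma$ and showing that $\nabla\Re(s)(\alpha^*,\beta^*)$ is a normal vector pointing below the curve, so a conjugate pair crosses into the open right half-plane and $N=2$ there; you instead exhibit a second test point $(\alpha,0)$ with $\alpha\ll 0$ (legitimately in the lower component since $\phi$ is a decreasing bijection of $\mathbb{R}$), get a positive real root of $x^q+\alpha x^{q_1}+\gamma$ by the intermediate value theorem, and propagate $N\geq 1$ through the component, using part (i) plus conjugate symmetry and $\Delta(0)=\gamma>0$ to exclude imaginary-axis roots so the instability clause of the preliminary theorem applies. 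Your route is more elementary — it avoids the paper's delicate sign computation of $\Re\bigl(i^{q_1}\overline{P(i\delta)}\bigr)$ — at the cost of the dynamical information the transversality argument provides (the exact count $N=2$ and the mechanism of the Hopf-like crossing), none of which is needed for the theorem as stated; in fact your test point even recovers $N\geq 2$ if one counts both sign changes of $g$. Both proofs rest on the same structural pillars, Lemma \ref{lem.curve.gamma} and Lemma \ref{lemma.number.roots}(iii), and your closing derivation of the converse in (ii) by exclusion matches the logic of the paper.
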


\begin{proof} 
\emph{Proof of statement (i).} 
    The characteristic equation \eqref{eq.char} has a pair of complex conjugated roots on the imaginary axis of the complex plane if and only if there exists $\omega>0$ such that $$\Delta(i(\gamma \omega)^{\frac{1}{q}};\alpha,\beta,\gamma,q_1,q_2,q)=0.$$
    Taking the real and imaginary part of the previous relation, we obtain
    $$\begin{cases}
    & \omega \cos \dfrac{q\pi}{2}+\alpha \omega^{\frac{q_1}{q}}\gamma^{\frac{q_1}{q}-1}\cos \dfrac{q_1\pi}{2}+\beta\omega^{\frac{q_2}{q}}\gamma^{\frac{q_2}{q}-1}\cos \dfrac{q_2\pi}{2}+1=0\\\\
    & \omega \sin \dfrac{q\pi}{2}+\alpha \omega^{\frac{q_1}{q}}\gamma^{\frac{q_1}{q}-1}\sin \dfrac{q_1\pi}{2}+\beta\omega^{\frac{q_2}{q}}\gamma^{\frac{q_2}{q}-1}\sin \dfrac{q_2\pi}{2}=0
    \end{cases}$$
    Solving the above system for $\alpha$ and $\beta$, it follows that the characteristic equation \eqref{eq.char} has a pair of pure imaginary roots if and only if the pair of parameters $(\alpha,\beta)$ belongs to the curve $\Gamma(\gamma,q_1,q_2,q)$ defined in Lemma \ref{lem.curve.gamma}.
    
\emph{Proof of statement (ii).} 
    Choosing $\alpha=\beta=1$, we will first prove that the characteristic function $\Delta(s;1,1,\gamma,q_1,q_2,q)$ does not have any roots with positive real part.
    
    Assuming the contrary, i.e. that the function $\Delta(s;1,1,\gamma,q_1,q_2,q)$ admits a root $s\in \mathbb{C}$ such that $\Re(s)\geq 0$, it is clear that   $$s^q+s^{q_1}+s^{q_2}+\gamma=0.$$
    As $\gamma>0$, it is obvious that $s\neq 0$. Dividing the previous equation by $s^{\frac{q}{2}}$, it follows that $$s^{\frac{q}{2}}+s^{q_1-\frac{q}{2}}+s^{q_2-\frac{q_2}{2}}+\gamma s^{-\frac{q}{2}}=0.$$
    As $\pm \frac{q}{2}$, $q_1-\frac{q}{2}$, $q_2-\frac{q}{2}\in [-1,1]$, it results that the real part of each term of this equation is positive, which leads to a contradiction.
    
    Therefore, we obtain that $N(1,1,\gamma,q_1,q_2,q)=0$, and based on Lemma \ref{lemma.number.roots}, it follows that $N(\alpha,\beta,\gamma,q_1,q_2,q)=0$ for any $(\alpha,\beta)$ belonging to the connected component of the set $\mathbb{R}^2\setminus \Gamma(\gamma,q_1,q_2,q)$ which contains the point $(1,1)$. Hence, we obtain the desired conclusion.
    
\emph{Proof of statement (iii).} 
    Consider the terms $\rho_1,\rho_2,\tilde{\rho_1},\tilde{\rho_2}$ introduced in the proof of Lemma \ref{lem.curve.gamma} (i).  Let $s(\alpha,\beta,\gamma,q_1,q_2,q)$ denote the root of the characteristic function $\Delta(s;\alpha,\beta,\gamma,q_1,q_2,q)$ satisfying the relation $$s(\alpha^*,\beta^*,\gamma,q_1,q_2,q)=i\delta,$$ where $\delta=(\gamma \omega)^{\frac{1}{q}}$ from the proof of statement $(i)$, with $(\alpha^*.\beta^*)\in \Gamma(\gamma;q_1,q_2,q)$.
    
    Differentiating with respect to $\alpha$ in the characteristic equation 
    $$s^q+\alpha s^{q_1}+\beta s^{q_2}+\gamma =0$$ leads to 
    $$qs^{q-1}\dfrac{\partial s}{\partial \alpha}+s^{q_1}+\alpha q_1s^{q_1-1}\dfrac{\partial s}{\partial \alpha}+\beta q_2s^{q_2-1}\dfrac{\partial s}{\partial \alpha}=0.$$
    Hence, taking the real part in this equation, we obtain 
    $$\dfrac{\partial \Re (s)}{\partial \alpha}=\Re \left( \dfrac{-s^{q_1}}{qs^{q-1}+\alpha q_1s^{q_1-1}+\beta q_2 s^{q_2-1}} \right).$$
    Moreover, we have 
    $$\dfrac{\partial \Re (s)}{\partial \alpha}{\Big|_{(\alpha^*,\beta^*)}}=\Re \left( \dfrac{-(i\delta)^{q_1}}{P(i\delta)} \right)=-\delta^{q_1}\Re \left( \dfrac{i^{q_1}\overline{P(i\delta)}}{|P(i\delta)|^2} \right)=\dfrac{-\delta^{q_1}}{|P(i\delta)|^2}\Re (i^{q_1}\overline{P(i\delta)}),$$ where $$P(s)=qs^{q-1}+\alpha^*q_1s^{q_1-1}+\beta^* q_2s^{q_2-1}.$$
        We further obtain
\begin{align*}
&\Re (i^{q_1}\overline{P(i\delta)})=\Re (\overline{i^{q_1}}P(i\delta))=\Re \left( q\gamma^{1-\frac{1}{q}}\omega^{-\frac{1}{q}}\left[i^{q-q_1-1}\omega-\frac{q_1}{q} i(\omega\tilde{\rho_2}-\rho_2)-\frac{q_2}{q} i^{q_2-q_1-1}(\omega\tilde{\rho_1}-\rho_1)\right]\right)
\end{align*}

As $\Re (i^{q-q_1-1})=\sin\frac{(q-q_1)\pi}{2}$ and $\Re (i^{q_2-q_1-1})=\sin\frac{(q_2-q_1)\pi}{2}$ and the second term of the previous expression is purely imaginary, we get
$$\Re (i^{q_1}\overline{P(i\delta)})=-q\gamma^{\frac{q_2-1}{q}}\omega^{1+\frac{q_2-1}{q}}\sin \dfrac{(q_2-q_1)\pi}{2}\cdot\dfrac{\partial \beta}{\partial\omega}.$$
It follows that 
$$\dfrac{\partial\Re(s)}{\partial\alpha}{\Big|_{(\alpha^*,\beta^*)}}=\dfrac{q\gamma^{\frac{q_1+q_2-1}{q}}\omega^{\frac{q_1+q_2-1}{q}}}{|P(i\delta)|^2}\sin\dfrac{(q_2-q_1)\pi}{2}\cdot \dfrac{\partial \beta}{\partial\omega}.$$
Along the same lines, we compute $\dfrac{\partial\Re(s)}{\partial\beta}\Big|_{(\alpha^*,\beta^*)}$, leading to the following gradient vector:
\begin{align*}\nabla &\Re(s)(\alpha^*,\beta^*)=\left(\frac{\partial\Re(s)}{\partial \alpha},\frac{\partial\Re(s)}{\partial \beta}\right)\bigg\rvert_{(\alpha^*,\beta^*)}=\\&=\dfrac{q\gamma^{\frac{q_1+q_2-1}{q}}\omega^{\frac{q_1+q_2+q-1}{q}}}{|P(i\delta)|^2}\sin\dfrac{(q_2-q_1)\pi}{2}\cdot \left( \dfrac{\partial\beta}{\partial\omega},-\dfrac{\partial \alpha}{\partial\omega} \right).
\end{align*} 
Moreover, taking into account the parametric equations of the curve $\Gamma(\gamma,q_1,q_2,q)$, we can easily see that 
the gradient vector $\nabla \Re(s)(\alpha^*,\beta^*)$ is a normal vector to the curve  $\Gamma(\gamma,q_1,q_2,q)$, pointing towards the region below the curve. Consequently, the following transversality condition holds for the directional derivative:
$$\nabla_{\overline{u}} \Re(s)(\alpha^*,\beta^*)=\left\langle\nabla \Re(s)(\alpha^*,\beta^*),\overline{u}\right\rangle>0,
$$
for any vector $\overline{u}$ pointing towards the region below the curve $\Gamma(\gamma,q_1,q_2,q)$. Therefore, as the parameters $(\alpha,\beta)$ vary and cross the curve $\Gamma(\gamma,q_1,q_2,q)$ into the region below the curve, $\Re(s)$ increases and becomes positive, i.e. the pair of complex conjugated roots $(s,\overline{s})$ crosses the imaginary axis from the open left half-plane to the open right half-plane. Hence, $N(\alpha,\beta,\gamma,q_1,q_2,q)= 2$ for any $(\alpha,\beta)$ belonging to the region below the curve $\Gamma(\gamma,q_1,q_2,q)$, and the equation
\eqref{eq.multi.order} is unstable.
\end{proof}

\section{Fractional-order-independent stability and instability results}
\label{sec:4}$ $

In this section, based on the results presented in the previous section, we seek to obtain fractional-order-independent stability and instability conditions for the fractional-order differential equation \eqref{eq.multi.order}, expressed in terms of the constant parameters $\alpha,\beta,\gamma$. 

With this aim in mind, the following lemma gives sufficient conditions for the instability of equation \eqref{eq.multi.order}, for any choice of the fractional orders $q_1,q_2,q$. 

\begin{lemma}\label{unstab.lem}
    If $\alpha+\beta+\gamma+1\leq 0$ or $\gamma<0$, the trivial solution of equation \eqref{eq.multi.order} is unstable, regardless of the fractional orders $q_1$, $q_2$ and $q$.
\end{lemma}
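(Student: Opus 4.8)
The plan is to exhibit a positive real root of the characteristic function $\Delta$ and then invoke the instability criterion from Section~2 (the statement that a root with $\Re(s)>0$ forces instability whenever $\gamma\neq 0$). To this end I would study the restriction of $\Delta$ to the positive real half-line, i.e. the continuous real-valued function
$$g(x)=x^q+\alpha x^{q_1}+\beta x^{q_2}+\gamma,\qquad x>0.$$
Since $0<q_1<q_2<q$, the term $x^q$ dominates as $x\to\infty$, so $g(x)\to+\infty$ independently of the signs of $\alpha$ and $\beta$; moreover $g(0^+)=\gamma$, because all the fractional powers vanish at the origin. The whole argument then reduces to locating a sign change of $g$ and applying the intermediate value theorem.

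For the case $\gamma<0$, I would simply note that $g(0^+)=\gamma<0$ while $g(+\infty)=+\infty$, so $g$ admits a root $x^*>0$. This $x^*$ is a root of $\Delta$ lying in the open right half-plane, and since $\gamma\neq 0$ the criterion of Section~2 yields instability.

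For the case $\alpha+\beta+\gamma+1\leq 0$, I would instead evaluate at $x=1$: since $g(1)=1+\alpha+\beta+\gamma\leq 0$ and $g(+\infty)=+\infty$, either $g(1)=0$ (so $s=1$ is already a root) or $g(1)<0$ produces a root $x^*>1$ by the intermediate value theorem. In both situations $\Delta$ has a positive real root, and when $\gamma>0$ the instability again follows from the Section~2 criterion. The subcase $\gamma<0$ of this hypothesis is already covered by the previous paragraph.

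The main obstacle is the borderline subcase $\gamma=0$ together with $\alpha+\beta+1\leq 0$, where the Section~2 instability criterion cannot be applied verbatim, as it requires $\gamma\neq 0$. Here I would argue instability directly: the positive real root $x^{*}\geq 1$ found above ($x^{*}=1$ when $\alpha+\beta+1=0$, and $x^{*}>1$ otherwise) corresponds to a pole of the resolvent in the open right half-plane, hence to an unbounded component of the fundamental solution, which precludes stability of the trivial equilibrium. I expect this degenerate case to be the only delicate point of the proof, the remainder being a routine intermediate-value argument.
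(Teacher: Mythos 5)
Your main argument coincides with the paper's own proof: the paper likewise restricts $\Delta$ to the positive real half-line, observes that $\Delta(1;\alpha,\beta,\gamma,q_1,q_2,q)=1+\alpha+\beta+\gamma\leq 0$ in the first case and $\Delta(0;\alpha,\beta,\gamma,q_1,q_2,q)=\gamma<0$ in the second, notes that $\Delta\to+\infty$ as $s\to\infty$, and concludes by the intermediate value theorem that there is a strictly positive real root, whence instability. So on the cases with $\gamma\neq 0$ your proof is correct and essentially identical to the paper's; you even make explicit the appeal to the instability criterion of Theorem 2.1(ii), which the paper leaves implicit.

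The step that fails is your patch for the borderline subcase $\gamma=0$, $\alpha+\beta+1\leq 0$. The claim that the positive real root $x^*\geq 1$ corresponds to a pole of the resolvent and hence to an unbounded component of the solution is false when $q\leq 1$: there the only initial datum is $x_0$, and with $\gamma=0$ the numerator in $X(s)=F(s)/\Delta(s)$ factors exactly as
$$F(s)=x_0\left(s^{q-1}+\alpha s^{q_1-1}+\beta s^{q_2-1}\right)=x_0\,s^{-1}\Delta(s),$$
so the pole at $x^*$ is removable, $X(s)=x_0/s$, and $\varphi(t,x_0)\equiv x_0$. (Directly: every Caputo derivative of order in $(0,1]$ annihilates constants, so by uniqueness all solutions are constant.) The trivial solution is then stable in the sense of Definition 2.2(i), so no argument can deliver instability in that subcase; a root of $\Delta$ in the open right half-plane does not by itself force instability when $\gamma=0$, which is precisely why Theorem 2.1(ii) carries the hypothesis $\gamma\neq 0$. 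For $q\in(1,2)$ the extra terms in $F(s)$ involving $x_1$ generally do preserve the pole (e.g.\ for $q_2<1<q$ one gets $X(s)=x_0/s+x_1 s^{q-2}/\Delta(s)$), but even there non-cancellation must be checked case by case: for $1<q_1<q_2<q$ one finds $X(s)=x_0/s+x_1/s^2$, so instability comes from linear growth $x_0+x_1t$ rather than from $x^*$. To be fair, the paper's own proof silently skips $\gamma=0$ as well, asserting instability from the positive real root without the hypothesis $\gamma\neq 0$ needed to invoke Theorem 2.1(ii); you have correctly put your finger on a genuine latent gap in the lemma, but your resolution of it does not work as stated.
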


\begin{proof}
    Proving this result will resume to showing that the characteristic function $\Delta(s;\alpha,\beta,\gamma,q_1,q_2,q)$ has at least one positive real root.    
    
    On one hand, if $\alpha+\beta+\gamma+1\leq 0$ we can easily see that $$\Delta(1;\alpha,\beta,\gamma,q_1,q_2,q)=1+\alpha+\beta+\gamma\leq 0.$$
    
    Moreover, if $\gamma<0$, it is obvious that 
    $$\Delta(0;\alpha,\beta,\gamma,q_1,q_2,q)=\gamma< 0.$$
    
    On the other hand, we notice that $$\Delta(s;\alpha,\beta,\gamma,q_1,q_2,q)\rightarrow\infty\quad \text{when}\ s\to\infty.$$
    
    Hence, in both cases, the function $s\mapsto \Delta(s;\alpha,\beta,\gamma,q_1,q_2,q)$ has at least one strictly positive real root. Therefore, equation \eqref{eq.multi.order} is unstable, regardless of the fractional orders $q_1$, $q_2$ and $q$.
\end{proof}

A sufficient condition for the asymptotic stability of the equation \eqref{eq.multi.order}, for any choice of the fractional orders $(q_1,q_2,q)\in D$ is given by the following lemma. From now on, only the case $\gamma>0$ will be discussed, as Lemma \ref{unstab.lem} provides that if $\gamma<0$, the equation \eqref{eq.multi.order} is unstable, for any choice of the fractional orders $(q_1,q_2,q)\in D$.

\begin{lemma}\label{stab.lem}
    If $\alpha> 0$, $\beta> 0$ and $\gamma>0$, the trivial solution of equation \eqref{eq.multi.order} is asymptotically stable, regardless of the fractional orders $q_1$, $q_2$ and $q$.
\end{lemma}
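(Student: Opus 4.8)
The plan is to deduce this lemma directly from the fractional-order-dependent characterisation in Theorem \ref{thm.q}(ii): since $(q_1,q_2,q)\in D$ is arbitrary but fixed throughout, it suffices to show that the hypotheses $\alpha>0$, $\beta>0$, $\gamma>0$ force the inequality $\beta>\phi_{\gamma,q_1,q_2,q}(\alpha)$, which by Theorem \ref{thm.q}(ii) is exactly the condition for $\mathcal{O}(t^{-q'})$-asymptotic stability (hence, a fortiori, asymptotic stability) of the trivial solution. Because the argument is carried out for arbitrary fractional orders, the conclusion is automatically order-independent.

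The key geometric observation is supplied by Lemma \ref{lem.curve.gamma}(ii): the curve $\Gamma(\gamma,q_1,q_2,q)$, which is the graph of the decreasing convex bijection $\phi_{\gamma,q_1,q_2,q}:\mathbb{R}\to\mathbb{R}$, lies entirely outside the open first quadrant of the $(\alpha,\beta)$-plane. First I would fix any $\alpha>0$ and consider the point $(\alpha,\phi_{\gamma,q_1,q_2,q}(\alpha))$ on $\Gamma(\gamma,q_1,q_2,q)$. Since this point has positive first coordinate yet cannot lie in the open first quadrant, its second coordinate must satisfy $\phi_{\gamma,q_1,q_2,q}(\alpha)\le 0$. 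Consequently, for any $\beta>0$ we obtain $\beta>0\ge\phi_{\gamma,q_1,q_2,q}(\alpha)$, so $(\alpha,\beta)$ lies strictly above the curve, as required.

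It remains to confirm that the region $\beta>\phi_{\gamma,q_1,q_2,q}(\alpha)$ is indeed the stable one, i.e. that the orientation of the inequality in Theorem \ref{thm.q}(ii) matches the geometry I am using. This is already pinned down inside the proof of that theorem, where it was shown that $N(1,1,\gamma,q_1,q_2,q)=0$; since $(1,1)$ lies in the open first quadrant and hence (by the previous paragraph) in the region $\beta>\phi_{\gamma,q_1,q_2,q}(\alpha)$, the whole connected component of $\mathbb{R}^2\setminus\Gamma(\gamma,q_1,q_2,q)$ containing it carries no unstable roots. Thus every $(\alpha,\beta)$ with $\alpha,\beta>0$ belongs to the asymptotically stable region, and the result follows.

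The main obstacle — really the only nontrivial point — is ensuring that the orientation of the strict inequality is consistent with the shape of $\Gamma(\gamma,q_1,q_2,q)$; once the location of the sample point $(1,1)$ relative to the curve is settled, everything else is immediate. As a self-contained alternative one could instead argue directly on the characteristic function: dividing $\Delta(s;\alpha,\beta,\gamma,q_1,q_2,q)=0$ by $s^{q/2}$ and noting that all resulting exponents $q/2$, $q_1-q/2$, $q_2-q/2$, $-q/2$ lie in $[-1,1]$, so that for any $s$ with $\Re(s)\ge 0$ every term has nonnegative real part and the positive coefficients $\alpha,\beta,\gamma$ keep the sum strictly positive, contradicting the equation. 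This reproduces the absence of unstable roots, but it requires extra care at the boundary case $q=2$ with $s$ purely imaginary, which is why I prefer routing the proof through Lemma \ref{lem.curve.gamma}(ii) and Theorem \ref{thm.q}(ii).
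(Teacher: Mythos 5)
Your proof is correct, but it takes a genuinely different route from the paper's. The paper proves Lemma \ref{stab.lem} directly on the characteristic function: assuming a root $s_0\neq 0$ with $\Re(s_0)\geq 0$, it multiplies $s_0^q+\alpha s_0^{q_1}+\beta s_0^{q_2}+\gamma=0$ by $s_0^{-\frac{q}{2}}$ and notes that the exponents $\pm\frac{q}{2}$ lie in $[-1,1]$ while $q_1-\frac{q}{2}$ and $q_2-\frac{q}{2}$ lie in $(-1,1)$, so the left-hand side has strictly positive real part, a contradiction --- this is exactly the ``self-contained alternative'' you sketched at the end and set aside. Your preferred route instead deduces the lemma from Theorem \ref{thm.q}(ii) plus Lemma \ref{lem.curve.gamma}(ii): for $\alpha>0$ the point $(\alpha,\phi_{\gamma,q_1,q_2,q}(\alpha))$ lies on $\Gamma(\gamma,q_1,q_2,q)$ and hence outside the open first quadrant, forcing $\phi_{\gamma,q_1,q_2,q}(\alpha)\leq 0<\beta$, so the stability criterion applies for every fixed $(q_1,q_2,q)\in D$. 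This is legitimate and non-circular, since Theorem \ref{thm.q} is established before Lemma \ref{stab.lem} in the paper's order of exposition (its proof performs the same positivity computation, but only at the sample point $(1,1)$); note also that your paragraph ``confirming the orientation'' is redundant, because the statement of Theorem \ref{thm.q}(ii) already fixes the direction of the inequality. Comparing the two: your route is shorter and even delivers the stronger conclusion of $\mathcal{O}(t^{-q'})$-asymptotic stability for free, at the cost of importing the heavier machinery behind Theorem \ref{thm.q} (the root-counting function $N$, Lemma \ref{lemma.number.roots}, Rouch\'e's theorem), whereas the paper's argument is elementary and self-contained. Finally, the boundary worry that motivated your choice is easily dispatched: when $q=2$ and $s_0$ is purely imaginary, the terms $s_0^{\frac{q}{2}}$ and $\gamma s_0^{-\frac{q}{2}}$ have zero rather than positive real part, but since $q_1-\frac{q}{2},\,q_2-\frac{q}{2}\in(-1,1)$ strictly and $\alpha,\beta>0$, the terms $\alpha s_0^{q_1-\frac{q}{2}}$ and $\beta s_0^{q_2-\frac{q}{2}}$ have strictly positive real part, so the sum still has strictly positive real part and the direct proof goes through; the extra care you feared amounts to a one-line remark, not an obstruction.
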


\begin{proof}
    Let $\alpha> 0$, $\beta> 0$ and $\gamma>0$. By contradiction, we assume that $\Delta(s;\alpha,\beta,\gamma,q_1,q_2,q)$ has a root $s_0\neq 0$ with $\Re(s_0)\geq 0$. Then $$s_0^q+\alpha s_0^{q_1}+\beta s_0^{q_2}+\gamma=0.$$
    
    Then, multiplying this equality by $s_0^{-\frac{q}{2}}$, we obtain $$s_0^{\frac{q}{2}}+\alpha s_0^{q_1-\frac{q}{2}}+\beta s_0^{q_2-\frac{q}{2}}+\gamma s_0^{-\frac{q}{2}}=0.$$
    
    As $\pm\frac{q}{2}\in[-1,1]$ and $q_1-\frac{q}{2},q_2-\frac{q}{2}\in (-1,1)$, it follows that the real parts of each term from the left-hand side of this equation are positive and $$\Re \left(s_0^{\frac{q}{2}}+\alpha s_0^{q_1-\frac{q}{2}}+\beta s_0^{q_2-\frac{q}{2}}+\gamma s_0^{-\frac{q}{2}}\right)>0,$$ which is absurd.
    
    Hence, $\Re(s)<0$, for any root $s$ of the characteristic function $\Delta(s;\alpha,\beta,\gamma,q_1,q_2,q)$, which means that the trivial solution of equation \eqref{eq.multi.order} is asymptotically stable, regardless of the fractional orders $q$, $q_1$ and $q_2$.
\end{proof}

In the following, for $\gamma>0$, the following regions are considered in the $(\alpha,\beta)$-plane:
\begin{align*}
    S(\gamma)&=\{ (\alpha,\beta)\in\mathbb{R}^2~:~\alpha>0,\beta>0 \} \\
    U(\gamma)&=\{ (\alpha,\beta)\in\mathbb{R}^2~:~\alpha+\beta+\gamma+1\leq 0 \}.
\end{align*}

We have shown in Lemma \ref{unstab.lem} that if $(\alpha,\beta)\in U(\gamma)$, then the trivial solution of equation \eqref{eq.multi.order} is unstable stable, regardless of the fractional orders $q_1$, $q_2$ and $q$, whereas in Lemma \ref{stab.lem}, if $(\alpha,\beta)\in S(\gamma)$, the trivial solution of \eqref{eq.multi.order} is asymptotically stable, regardless of the fractional orders $q_1$, $q_2$ and $q$.  Furthermore, we will show that the conditions provided by the sufficiency Lemmas \ref{unstab.lem} and \ref{stab.lem} are at the same time necessary conditions for the fractional-order-independent asymptotic stability / instability of equation \eqref{eq.multi.order}, respectively. Therefore, we will call the regions $S(\gamma)$ and $U(\gamma)$ \textit{fractional-order-independent asymptotic stability / instability regions} for the equation \eqref{eq.multi.order}, respectively.

\begin{rem}
In Fig. \ref{fig.curves.frac}, we have plotted several curves $\Gamma(\gamma,q_1,q_2,q)$ for $\gamma=4$ and different values of fractional orders $q_1,q_2,q$ such that $0<q_1<q_2<q\leq 2$, as well as the fractional-order-independent stability and instability regions $S(\gamma)$ and $U(\gamma)$, respectively (plotted in darker shades). The regions below and above each curve (plotted in lighter shades) represent the  fractional-order-dependent asymptotic stability and instability regions, respectively, corresponding to the particular values of the fractional orders $q_1,q_2,q$ chosen for each plot.
\end{rem}

\begin{figure}[htbp]
    \centering
    \includegraphics[width=0.8\linewidth]{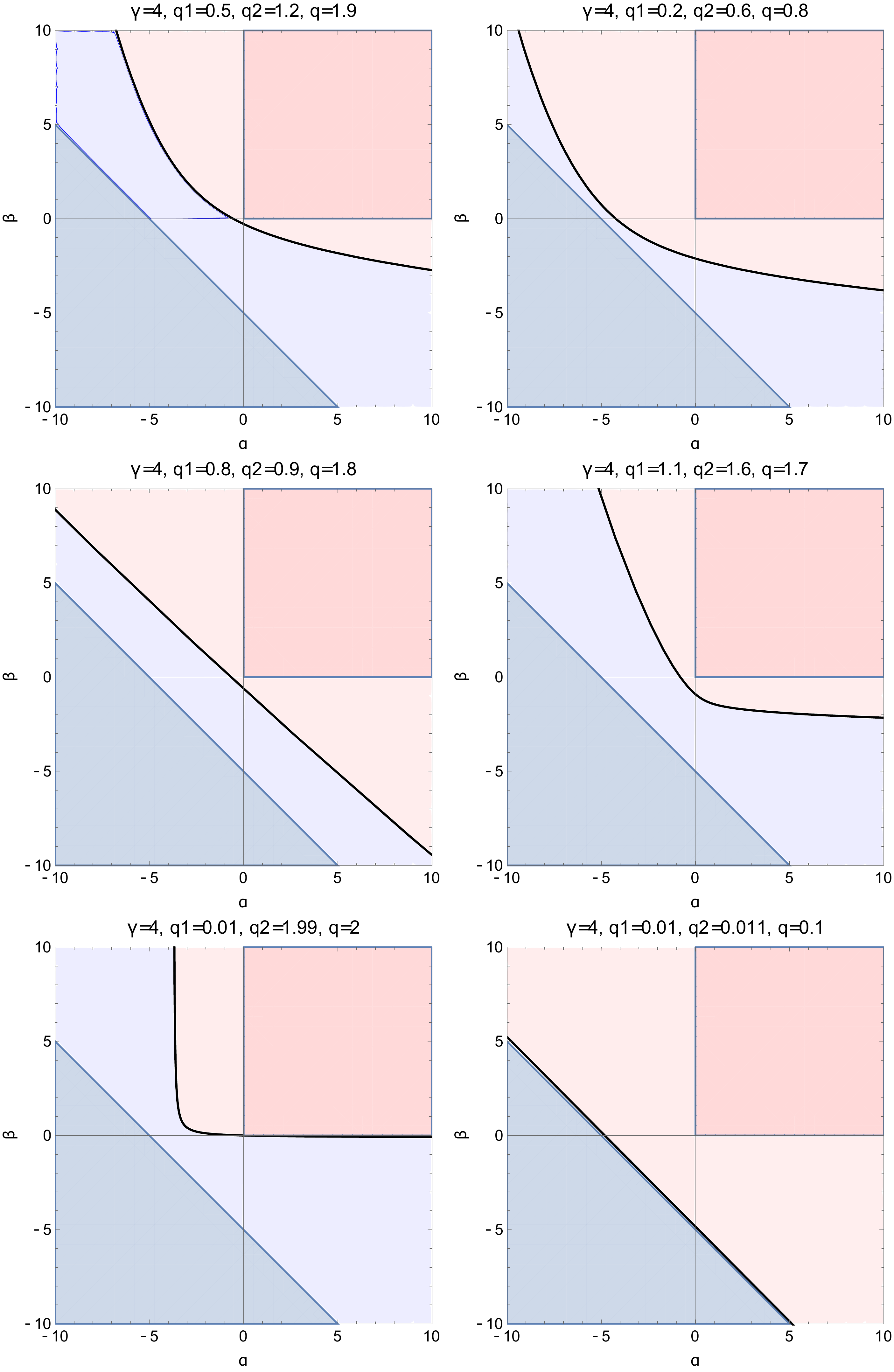}
    \caption{Curves $\Gamma(\gamma,q_1,q_2,q)$ (black) for $\gamma=4$ and different values of the fractional orders such that $0<q_1<q_2<q<2$. Darker shades of red / blue represent the fractional-order independent asymptotic stability / instability regions $S(\gamma)$ and $U(\gamma)$. Lighter shades of red / blue are associated to fractional-order-dependent stability/instability regions.}
    \label{fig.curves.frac}
\end{figure}

In the following, $Q(\gamma)$ denotes the region of the $(\alpha,\beta)$-plane which is determined by the curves $\Gamma(\gamma,q_1,q_2,q)$ given in Lemma \ref{lem.curve.gamma}:
$$Q(\gamma)=\{ (\alpha,\beta)\in\mathbb{R}^2~:~\exists~ 0<q_1<q_2<q\leq 2 ~ \text{s.t.}~ (\alpha,\beta)\in\Gamma(\gamma,q_1,q_2,q)  \}.$$

\begin{rem}
    In Fig. \ref{fig.curves.all}, a large number of curves belonging to region $Q(\gamma)$ is plotted, for different values of randomly generated fractional orders $(q_1,q_2,q)\in D$ (a total of $11^3$ triplets). It can be noticed that the plotted curves do not intersect neither the stability region $S(\gamma)$, nor the instability region $U(\gamma)$. Furthermore, the next result will actually prove that the union of all the curves $\Gamma(\gamma,q_1,q_2,q)$ (i.e. the set $Q(\gamma)$ defined above) fills the whole region of the $(\alpha,\beta)$-plane which separates the sets $S(\gamma)$ and $U(\gamma)$.
\end{rem}

\begin{figure}[htbp]
    \centering
    \includegraphics[width=0.5\linewidth]{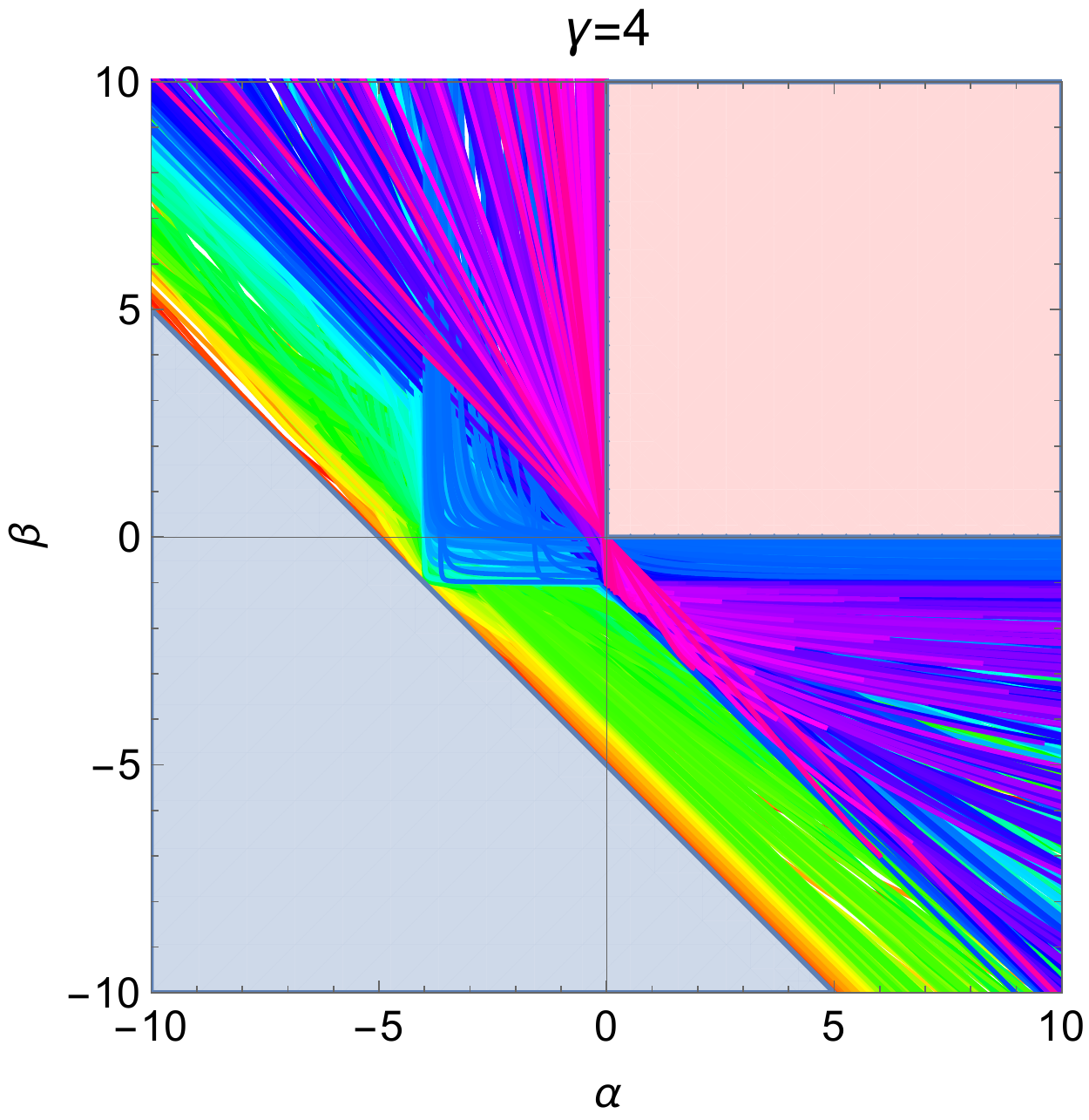}
    \caption{Curves $\Gamma(\gamma,q_1,q_2,q)$ given by Lemma \ref{lem.curve.gamma}, for $\gamma=4$ and $11^3$ randomly generated triplets of fractional orders such that $0<q_1<q_2<q<2$ (different colors are assigned to each curve, depending on the value of $q_1+q_2+q$). The red / blue regions represent the fractional-order independent asymptotic stability / instability regions $S(\gamma)$ and $U(\gamma)$, respectively.}
    \label{fig.curves.all}
\end{figure}

\begin{lemma}\label{lem.Q.multi}
    We have $$Q(\gamma)=\mathbb{R}^2\setminus \left( S(\gamma)\cup U(\gamma) \right).$$
\end{lemma}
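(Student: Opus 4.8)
The plan is to establish the two inclusions $Q(\gamma)\subseteq\mathbb{R}^2\setminus(S(\gamma)\cup U(\gamma))$ and $\mathbb{R}^2\setminus(S(\gamma)\cup U(\gamma))\subseteq Q(\gamma)$ separately. Throughout I would exploit the fact, recorded in Lemma~\ref{lem.curve.gamma}, that for fixed orders the curve $\Gamma(\gamma,q_1,q_2,q)$ is the graph of the decreasing convex bijection $\phi_{\gamma,q_1,q_2,q}\colon\mathbb{R}\to\mathbb{R}$, so that membership of a point $(\alpha_0,\beta_0)$ in that single curve is equivalent to the scalar equation $\phi_{\gamma,q_1,q_2,q}(\alpha_0)=\beta_0$. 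The whole question thus reduces to understanding, for each fixed $\alpha_0$, the set of values attained by $\phi_{\gamma,q_1,q_2,q}(\alpha_0)$ as the orders range over $D$.

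For the inclusion $Q(\gamma)\subseteq\mathbb{R}^2\setminus(S(\gamma)\cup U(\gamma))$ I would fix an arbitrary triple $(q_1,q_2,q)\in D$ and check that its curve avoids both regions. Disjointness from $S(\gamma)$ is immediate from Lemma~\ref{lem.curve.gamma}(ii). For disjointness from $U(\gamma)$, observe that a curve point lies in $U(\gamma)$ exactly when $\Delta(1;\alpha,\beta,\gamma,q_1,q_2,q)=1+\alpha+\beta+\gamma\le 0$, so it suffices to prove the strict inequality $\alpha+\beta>-(\gamma+1)$ along the curve. Using the parametrization and the signs of $\rho_1,\rho_2,\tilde\rho_1,\tilde\rho_2$ from Lemma~\ref{lem.curve.gamma}, one sees that $\alpha(\omega)+\beta(\omega)\to+\infty$ as $\omega\to 0^+$ and as $\omega\to+\infty$; hence its infimum is an interior minimum, attained where the tangent to the convex curve has slope $-1$, and bounding that minimal value below by $-(\gamma+1)$ is a direct trigonometric estimate.

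The substantive inclusion is $\mathbb{R}^2\setminus(S(\gamma)\cup U(\gamma))\subseteq Q(\gamma)$, which I would prove by a continuity and intermediate value argument in the orders. Fix $(\alpha_0,\beta_0)\notin S(\gamma)\cup U(\gamma)$. Since the parametrization of $\Gamma(\gamma,q_1,q_2,q)$ is smooth and each $\phi_{\gamma,q_1,q_2,q}$ is a bijection, the map $(q_1,q_2,q)\mapsto\phi_{\gamma,q_1,q_2,q}(\alpha_0)$ is continuous on the connected set $D$. By the intermediate value theorem it then suffices to produce one triple of orders with $\phi_{\gamma,q_1,q_2,q}(\alpha_0)<\beta_0$ and another with $\phi_{\gamma,q_1,q_2,q}(\alpha_0)>\beta_0$; any intermediate triple will satisfy $\phi_{\gamma,q_1,q_2,q}(\alpha_0)=\beta_0$, placing $(\alpha_0,\beta_0)$ on a curve of the family. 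In the language of Theorem~\ref{thm.q} this amounts to exhibiting orders for which $(\alpha_0,\beta_0)$ is asymptotically stable (it lies above the curve) and orders for which it is unstable (it lies below the curve). The first is available because $(\alpha_0,\beta_0)\notin U(\gamma)$ gives $\beta_0>-(\gamma+1)-\alpha_0$, so any curve obtained from a near-degenerate triple whose values $\alpha+\beta$ stay close to $-(\gamma+1)$ passes below the point; the second is available because $(\alpha_0,\beta_0)\notin S(\gamma)$, so the point can be overtaken by curves rising toward the coordinate axes.

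The main obstacle is precisely this asymptotic control of the extremal members of the family as $(q_1,q_2,q)$ tends to $\partial D$. For the lower extreme I would use the degenerate-line formula from the Remark following Lemma~\ref{lem.curve.gamma}: taking $q_2-q_1\to 0^+$ produces lines $\alpha+\beta=-C(q^{*},q)$, and optimizing $C(q^{*},q)$ in the limit $q\to 0^+$ (the optimal ratio being $q^{*}/q\to 1/(1+\gamma)$) shows that $\sup C=\gamma+1$, so the family descends arbitrarily close to the boundary of $U(\gamma)$ without ever reaching it. The upper extreme, where the curves must be shown to accumulate on the boundary of $S(\gamma)$, is the more delicate part and is where the set identity is most sensitive: the relevant suprema of the heights $\phi_{\gamma,q_1,q_2,q}(\alpha_0)$ are approached but not attained along the coordinate axes, so the equality in the statement must be read with the behaviour on $\partial S(\gamma)$ furnished by this limiting analysis.
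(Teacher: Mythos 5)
Your architecture is sound, and in the substantive inclusion your route is genuinely different from the paper's. For $\mathbb{R}^2\setminus(S(\gamma)\cup U(\gamma))\subseteq Q(\gamma)$ the paper exploits the symmetry of $Q(\gamma)$ about the bisector $\alpha=\beta$, intersects each curve with a line $\alpha-\beta=m$, shows by Lagrange multipliers that the intersection abscissa $\alpha_m$ has no critical points in the interior of $D$, and then identifies $\partial Q(\gamma)$ with $\partial S(\gamma)\cup\partial U(\gamma)$ by limiting analysis (an explicit L'Hospital computation along orders $p_i/n$ for the $U$-side, collapse of the curves onto the coordinate semi-axes for the $S$-side). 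Your intermediate-value sandwich in the parameter domain replaces all of the critical-point machinery: continuity of $(q_1,q_2,q)\mapsto\phi_{\gamma,q_1,q_2,q}(\alpha_0)$ follows from $\dfrac{\partial\alpha}{\partial\omega}>0$ and the implicit function theorem (the same tool the paper invokes for $\omega_m^*$), $D$ is convex hence connected, and you only need one curve below and one above the given point, which you draw from the same two boundary families the paper uses. Your supporting computation is correct: for the degenerate lines $\alpha+\beta=-C(q^*,q)$ of the Remark after Lemma~\ref{lem.curve.gamma}, $C\to\gamma^{1-c}/\left(c^c(1-c)^{1-c}\right)$ as $q\to0^+$ with $c=q^*/q$, maximized at $c=1/(1+\gamma)$ with value $\gamma+1$, so the family descends to $\partial U(\gamma)$ as you claim. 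Your closing caveat about $\partial S(\gamma)$ is also well taken and in fact sharper than the paper: the axis crossings of any curve satisfy $\rho_1\tilde{\rho}_2-\rho_2\tilde{\rho}_1=-\sin\frac{q\pi}{2}/\sin\frac{(q_2-q_1)\pi}{2}\leq 0$, with equality only for $q=2$, so no curve with $q<2$ meets the open positive semi-axes and the $q=2$ curves pass exactly through the origin; the literal set equality thus needs to be read modulo $\partial S(\gamma)\setminus\{(0,0)\}$, a point the paper's proof glosses when it argues only that $\partial S(\gamma)\subset\partial Q(\gamma)$.

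The genuine gap is in your first inclusion, where you dismiss the uniform strict bound $\alpha+\beta>-(\gamma+1)$ along every curve as ``a direct trigonometric estimate.'' It is not: this is the analytic heart of $Q(\gamma)\cap U(\gamma)=\varnothing$, and your tangency observation (the minimum occurs where the slope of the convex curve is $-1$) locates the minimizer but yields no bound. Worse, the bound has no slack: by your own Step-2 computation the infimum of $\alpha+\beta$ over all of $Q(\gamma)$ is exactly $-(\gamma+1)$, so any lossy estimate fails in the degenerate limit and the argument must be exact there. The paper's mechanism, which you would need to rediscover or replace, is a decomposition: with $t=\gamma\omega$ one writes $\alpha+\beta=u(t)+\gamma v(t)$ where $u(t)=t^{1-\frac{q_1}{q}}\tilde{\rho}_2-t^{1-\frac{q_2}{q}}\tilde{\rho}_1$ and $v(t)=t^{-\frac{q_2}{q}}\rho_1-t^{-\frac{q_1}{q}}\rho_2$; each one-variable function is minimized in closed form, and $u_{\min}>-1$, $v_{\min}>-1$ follow from the superadditivity of $w(x)=x\ln\left(x/\sin x\right)$ on $(0,\pi)$ (a convexity argument applied with $x=(q-q_2)\frac{\pi}{2}$, $y=(q-q_1)\frac{\pi}{2}$), whence $\alpha+\beta>-1-\gamma$ uniformly over $\omega>0$ and $(q_1,q_2,q)\in D$. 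Your limits $\alpha(\omega)+\beta(\omega)\to+\infty$ as $\omega\to0^+$ and $\omega\to\infty$ are correct, but until the interior minimum is actually bounded, Step 1 asserts precisely the inequality it is supposed to prove; supply this estimate (or an equivalent uniform one) and your proof is complete.
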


\begin{proof} We will prove this result by double inclusion.

\noindent \emph{Step 1. Proof of the inclusion $Q(\gamma)\subseteq\mathbb{R}^2\setminus\left(S(\gamma)\cup U(\gamma)\right)$.}

Let $\Gamma(\gamma,q_1,q_2,q)\in Q(\gamma)$. From Lemma \ref{lem.curve.gamma} (ii) it follows that the curve $\Gamma(\gamma,q_1,q_2,q)$ does not intersect the first quadrant of the $(\alpha,\beta)$-plane. Therefore $Q(\gamma)\cap S(\gamma)=\varnothing$.

Let $(\alpha,\beta)\in \Gamma(\gamma,q_1,q_2,q)$ and $\omega>0$ such that $$\alpha=\gamma^{1-\frac{q_1}{q}} h(\omega,q_1,q_2,q)~\text{and}~\beta=\gamma^{1-\frac{q_2}{q}} h(\omega,q_2,q_1,q).$$

We have
\begin{align*}
    \alpha+\beta&=\gamma^{1-\frac{q_1}{q}} h(\omega,q_1,q_2,q)+\gamma^{1-\frac{q_2}{q}} h(\omega,q_2,q_1,q)\\
    &=\gamma^{1-\frac{q_1}{q}}\omega^{-\frac{q_1}{q}}(\omega\tilde{\rho_2}-\rho_2)+\gamma^{1-\frac{q_2}{q}}\omega^{-\frac{q_2}{q}}(\rho_1-\omega\tilde{\rho_1}) \\
    &=(\gamma\omega)^{1-\frac{q_1}{q}}\tilde{\rho_2}-(\gamma\omega)^{1-\frac{q_2}{q}}\tilde{\rho_1}+\gamma\left[ (\gamma\omega)^{-\frac{q_2}{q}}\rho_1-(\gamma\omega)^{-\frac{q_1}{q}}\rho_2 \right]
\end{align*}

Denoting  $t=\gamma\omega>0$ and  $$u(t)=t^{1-\frac{q_1}{q}}\tilde{\rho_2}-t^{1-\frac{q_2}{q}}\tilde{\rho_1}\quad  \text{and}\quad v(t)=t^{-\frac{q_2}{q}}\rho_1-t^{-\frac{q_1}{q}}\rho_2,$$
we can easily see that $\alpha+\beta=u(t)+\gamma v(t)$.

The minimum of the function $u(t)$ is reached at $t^*=\left( \dfrac{(q-q_2)\tilde{\rho_1}}{(q-q_1)\tilde{\rho_2}} \right)^{\frac{q}{q_2-q_1}}$ and therefore
$$u(t)\geq u_{min}=u(t^*)=-\left( \dfrac{q-q_2}{\sin(q-q_2)\frac{\pi}{2}} \right)^{\frac{q-q_2}{q_2-q_1}} \left( \dfrac{\sin(q-q_1)\frac{\pi}{2}}{q-q_1} \right)^{\frac{q-q_1}{q_2-q_1}} \dfrac{q_2-q_1}{\sin (q_2-q_1)\frac{\pi}{2}}.$$
In what follows, we show that $u_{min}\geq -1$. As the function  $w(x)=x\ln\left(\frac{x}{\sin x}\right)$ is convex on $[0,\pi]$ with $\lim\limits_{x\to 0}w(x)=0$, we deduce that $q$ is superadditive and so we have $$w(x)+w(y-x)<w(y),~0<x<y<\pi.$$
Taking $x=(q-q_2)\frac{\pi}{2}$ and $y=(q-q_1)\frac{\pi}{2}$, we further obtain obtain $$\dfrac{q-q_2}{q_2-q_1}\ln \dfrac{q-q_2}{\sin (q_2-q_1)\frac{\pi}{2}}-\dfrac{q-q_1}{q_2-q_1}\ln\dfrac{q-q_1}{\sin (q-q_1)\frac{\pi}{2}}+\ln\dfrac{q_2-q_1}{\sin(q_2-q_1)\frac{\pi}{2}}< 0,$$ which leads to $u_{min}> -1$, implying that $u(t)> -1$, for all $t>0$.

Similar arguments lead to the inequality $v(t)> -1$, for all $t>0$.

Therefore,
$$\alpha+\beta=u(t)+\gamma v(t)> -1-\gamma,$$  which means that $(\alpha,\beta)\notin U(\gamma)$.
Hence $Q(\gamma)\subseteq \mathbb{R}^2\setminus(S(\gamma)\cup U(\gamma))$.

\medskip
\noindent \emph{Step 2. Proof of the inclusion $Q(\gamma)\supseteq\mathbb{R}^2\setminus\left(S(\gamma)\cup U(\gamma)\right)$.}

Consider the function $G_{\gamma}:(0,\infty)\times D\to \mathbb{R}^2$ defined by $$G_{\gamma}(\omega,q_1,q_2,q)=\left( \gamma^{1-\frac{q_1}{q}}h(\omega,q_1,q_2,q),\gamma^{1-\frac{q_2}{q}}h(\omega,q_2,q_1,q) \right),$$ where $D=\{ (q_1,q_2,q)\in\mathbb{R}^3: 0<q_1<q_2<q\leq 2 \}$ and the function $h$ defined in Lemma \ref{lem.curve.gamma}.

We can easily notice that $Q(\gamma)$ is the image of the function $G_{\gamma}$, i.e. $Q(\gamma)=G_{\gamma}((0,\infty)\times D)$.

As $h(-\omega,q_1,q_2,q)=h(\omega,q_2,q_1,q)$, it results that $Q(\gamma)$ is symmetric with respect to the first bisector $\alpha=\beta$ of the $(\alpha,\beta)$-plane.
Consequently, to determine $Q(\gamma)$, it suffices to characterise its intersection with an arbitrary straight line, parallel to the first bisector of the $(\alpha,\beta)$-plane: $$\alpha-\beta=m,\quad m\in\mathbb{R}.$$

We recall from Lemma \ref{lem.curve.gamma} that for fixed $\gamma>0$ and $(q_1,q_2,q)\in D$, the curve $\Gamma(\gamma,q_1,q_2,q)$ is the graph of a decreasing, smooth, convex bijective function in the $(\alpha,\beta)$-plane. Therefore, its intersection with the line $\alpha-\beta=m$ is exactly one point, which means that for arbitrary fixed $(q_1,q_2,q)\in D$ and $m\in\mathbb{R}$, the equation $$\gamma^{1-\frac{q_1}{q}}h(\omega,q_1,q_2,q)-\gamma^{1-\frac{q_2}{q}}h(\omega,q_2,q_1,q)=m$$ has a unique solution $\omega_m^*(q_1,q_2,q)$.

By the implicit function theorem and the properties of the function $h$, it results that $\omega_m^*$ is a continuously differentiable function on $D$. It follows that the abscissa of the intersection point of the curve $\Gamma(\gamma,q_1,q_2,q)$ and the line $\alpha-\beta=m$ is $$\alpha_m(q_1,q_2,q)=\gamma^{1-\frac{q_1}{q}}h(\omega_m^\star(q_1,q_2,q),q_1,q_2,q),$$ which is a continuously differentiable function on the set $D$, meaning that $\alpha_m(D)$ is an interval. With the aim of finding this interval, we will determine the extreme values of $\alpha_m$ over the set $D$. 

Applying Lagrange's multipliers method, we will show that $\alpha_m$ does not have any critical points in the interior of the set $D$. Indeed, considering the system $$\lambda \nabla \beta=(\lambda-1)\nabla \alpha,$$
and eliminating $\lambda$, we obtain $$\begin{cases}
& \dfrac{\partial \beta}{\partial \omega}\cdot \dfrac{\partial \alpha}{\partial q_1}=\dfrac{\partial \beta}{\partial q_1}\cdot \dfrac{\partial \alpha}{\partial \omega}\\
& \dfrac{\partial \beta}{\partial \omega}\cdot \dfrac{\partial \alpha}{\partial q_2}=\dfrac{\partial \beta}{\partial q_2}\cdot \dfrac{\partial \alpha}{\partial \omega}\\
& \dfrac{\partial \beta}{\partial \omega}\cdot \dfrac{\partial \alpha}{\partial q}=\dfrac{\partial \beta}{\partial q}\cdot \dfrac{\partial \alpha}{\partial \omega}
\end{cases}$$
where the arguments were dropped for simplicity. Taking the first two equations from the previous system and eliminating $\ln (\gamma \omega)$, we obtain a quadratic equation in $\omega$ with a negative discriminant, which implies that it does not have any real roots.

Hence, the extreme values of the function $\alpha_m$ can only be reached on the boundary of the set $D$, or equivalently, the boundary $\partial Q(\gamma)$ is made up of points belonging to the curve $\Gamma(\gamma,q_1,q_2,q)$ with $(q_1,q_2,q)\in \partial D$.

Therefore, it remains to show that $\partial Q(\gamma)=\partial S(\gamma)\cup \partial U(\gamma)$.

On one hand, based on the definition of the function $h(\omega,q_1,q_2,q)$ given in Lemma \ref{lem.curve.gamma}, it is easy to see that if $(q_1,q_2,q)\rightarrow(0,q^*_2,2)$ with $q^*_2\in(0,2)$, or $(q_1,q_2,q)\rightarrow(q^*_1,2,2)$ with $q^*_1\in(0,2)$,  the curve $\Gamma(\gamma,q_1,q_2,q)$ approaches either the semi-axis $\beta=0$, $\alpha>0$, or the semi-axis  $\alpha=0$, $\beta>0$, respectively. Hence,  $\partial S(\gamma)\subset \partial Q(\gamma)$.

On the other hand, considering $\omega=\gamma^{-1}$ in the parametric equations of the curve $\Gamma(\gamma,q_1,q_2,q)$ given in Lemma \ref{lem.curve.gamma}, we deduce that the points $P(\gamma,q_1,q_2,q)=(\tilde{\rho_2}-\gamma\rho_2,\gamma\rho_1-\tilde{\rho_1})\in Q(\gamma)$. For arbitrary $p_1,p_2,p>0$ such that $p_1<p_2<p$, let us consider the sequence of points $$P_n=P\left(\gamma,\frac{p_1}{n},\frac{p_2}{n},\frac{p}{n}\right)\in Q(\gamma), \quad n\in \mathbb{N},~n\geq \left\lceil\frac{p}{2} \right\rceil.$$
By the L'Hospital's rule, we obtain
$$\lim_{n\rightarrow\infty} P_n=\left(\frac{p-p_2(1+\gamma)}{p_2-p_1},\frac{p_1(1+\gamma)-p}{p_2-p_1}\right).$$
Consequently, the locus of the limit points obtained above is in fact the straight line $\alpha+\beta+\gamma+1=0$. Therefore,  $\partial U(\gamma)\subset \partial Q(\gamma)$.

Hence, the proof is complete.
\end{proof}

We now state the main results of this section, emphasizing that fractional-order independent stability and instability results are particularly important when the exact fractional orders of the derivatives appearing in equation \eqref{eq.multi.order} are not known precisely. This is closely linked to the fact that often, mathematical modeling of real world phenomena leads to fractional-order differential equations with estimated or conjectured fractional orders.  

\begin{theorem}[Fractional-order independent stability and instability results]\label{thm.instab.multi}$ $
\begin{itemize}
\item[i.] The multi-term fractional differential equation \eqref{eq.multi.order} is unstable, regardless of the fractional orders $(q_1,q_2,q)\in D$, if and only if 
$$ 
\gamma<0\quad\text{or}\quad\alpha+\beta+\gamma+1 \leq 0. 
$$
\item[ii.]  The multi-term fractional differential equation \eqref{eq.multi.order} is asymptotically stable, regardless of the fractional orders $(q_1,q_2,q)\in D$, if and only if 
$$\alpha>0,~\beta>0 \text{ and } \gamma>0.$$
\end{itemize}
\end{theorem}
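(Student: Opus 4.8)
The plan is to derive both equivalences from the three-region decomposition of Lemma \ref{lem.Q.multi}. The $(\Leftarrow)$ implications of (i) and (ii) are nothing but Lemmas \ref{unstab.lem} and \ref{stab.lem}, so only the two converses require work, and I would prove both by contraposition. The decisive structural input is that, for $\gamma>0$, Lemma \ref{lem.Q.multi} partitions the $(\alpha,\beta)$-plane into $S(\gamma)$ (asymptotically stable for all orders), $U(\gamma)$ (unstable for all orders), and the transition set $Q(\gamma)=\mathbb{R}^2\setminus(S(\gamma)\cup U(\gamma))$; the whole difficulty is concentrated in $Q(\gamma)$, where stability genuinely depends on the orders.

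\emph{Converse of (ii).} I would assume asymptotic stability for every $(q_1,q_2,q)\in D$ and rule out each way the conclusion $\alpha,\beta,\gamma>0$ can fail. If $\gamma<0$, Lemma \ref{unstab.lem} gives instability for all orders; if $\gamma=0$, then $\Delta(0;\alpha,\beta,0,q_1,q_2,q)=0$, so $s=0$ is always a root and, by the root-location characterisation of Section 2, asymptotic stability is impossible; hence $\gamma>0$. If moreover $(\alpha,\beta)\notin S(\gamma)$, the decomposition puts $(\alpha,\beta)$ either in $U(\gamma)$, where Lemma \ref{unstab.lem} again gives instability, or in $Q(\gamma)$, where $(\alpha,\beta)\in\Gamma(\gamma,q_1,q_2,q)$ for some triple and Theorem \ref{thm.q}(i) produces purely imaginary roots, destroying asymptotic stability for that triple. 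Each alternative contradicts the hypothesis, forcing $\alpha,\beta>0$.

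\emph{Converse of (i).} This is where I expect the real work. Assuming $\gamma\ge 0$ and $\alpha+\beta+\gamma+1>0$, I must exhibit one triple making \eqref{eq.multi.order} stable. As $\gamma<0$ is already part of the target conclusion and $\gamma=0$ is degenerate, I take $\gamma>0$; then $\alpha+\beta+\gamma+1>0$ means $(\alpha,\beta)\notin U(\gamma)$, so by Lemma \ref{lem.Q.multi} the pair lies in $S(\gamma)$ — where Lemma \ref{stab.lem} finishes immediately — or in $Q(\gamma)$. In the latter case I would intersect the curves with the line $\alpha-\beta=m$ through the point, exactly as in Step 2 of the proof of Lemma \ref{lem.Q.multi}: as $(q_1,q_2,q)$ ranges over $D$, the abscissa $\alpha_m(q_1,q_2,q)$ of the intersection sweeps an open interval whose closure touches $\partial U(\gamma):\ \alpha+\beta+\gamma+1=0$ at its lower end. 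Because the point lies strictly above that line, its own abscissa $\alpha$ strictly exceeds this lower value; and since $\phi_{\gamma,q_1,q_2,q}$ is decreasing, the point sits strictly above $\Gamma(\gamma,q_1,q_2,q)$ precisely when $\alpha>\alpha_m(q_1,q_2,q)$. Choosing orders near the $\partial U(\gamma)$-end of the interval achieves this, and Theorem \ref{thm.q}(ii) then yields $\mathcal{O}(t^{-q'})$-asymptotic stability for those orders, so the equation is not unstable-regardless.

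\emph{Borderline case and the main obstacle.} It remains to address the degenerate value $\gamma=0$ in the converse of (i), which I would handle through the factorisation $\Delta(s)=s^{q_1}\bigl(s^{q-q_1}+\alpha+\beta s^{q_2-q_1}\bigr)$ together with a $\gamma\to 0^+$ limiting argument. The single genuine obstacle is the $Q(\gamma)$-step above: I must confirm that the intersection abscissa $\alpha_m$ really attains values down to the $\partial U(\gamma)$-level, so that some admissible triple pushes its curve strictly below the given point, and that interior points of $Q(\gamma)$ are not themselves extremal (boundary pieces of $Q(\gamma)$ on the non-negative axes being treated by continuity). This rests on the monotonicity and boundary-limit behaviour of $\Gamma(\gamma,q_1,q_2,q)$ established within Lemma \ref{lem.Q.multi}; once it is in hand, the remainder is routine bookkeeping across the three regions, invoking Lemmas \ref{unstab.lem}, \ref{stab.lem} and Theorem \ref{thm.q}.
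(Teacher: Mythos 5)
Your proposal is correct and follows essentially the same route as the paper: sufficiency is delegated to Lemmas \ref{unstab.lem} and \ref{stab.lem}, and the converses are obtained by combining the decomposition $\mathbb{R}^2=S(\gamma)\cup U(\gamma)\cup Q(\gamma)$ of Lemma \ref{lem.Q.multi} with Theorem \ref{thm.q} to locate the point relative to a suitable curve $\Gamma(\gamma,q_1,q_2,q)$. The additional care you take --- invoking the Step-2 sweeping argument of Lemma \ref{lem.Q.multi} to produce a curve passing \emph{strictly} below a point of $Q(\gamma)$ (the paper asserts the existence of such a triple directly from the lemma), treating the borderline $\gamma=0$, and writing out the necessity of (ii) that the paper omits as ``similar'' --- fills in details the paper's terse proof leaves implicit, without changing the method.
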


\begin{proof} The proof of 
sufficiency of statements (i) and (ii) is provided by Lemmas \ref{unstab.lem} and \ref{stab.lem}. 

For the proof of necessity of statement (i), if $\gamma>0$ and the equation \eqref{eq.multi.order} is unstable, regardless of the fractional orders $(q_1,q_2,q)\in D$,  assuming by contradiction that $(\alpha,\beta)\notin U(\gamma)$, Lemma \ref{lem.Q.multi} implies that there exist $(q_1^*,q_2^*,q^*)\in D$ for which $(\alpha,\beta)$ belongs to the connected component of $\mathbb{R}^2\setminus \Gamma(\gamma,q_1^*,q_2^*,q^*)$ which includes $S(\gamma)$, or equivalently, $(\alpha,\beta)$ is above  the curve $\Gamma(\gamma,q_1^*,q_2^*,q^*)$. Therefore, Theorem \ref{thm.q} implies that for the particular fractional orders $q_1^*,q_2^*,q^*$, the equation \eqref{eq.multi.order} is asymptotically stable, which contradicts our claim. 

The proof of necessity of statement (ii) is completed by \textit{reduction ad absurdum} in a similar way as above, and hence, it will be omitted. 
\end{proof}

\section{Examples}

As applications to the theoretical results discussed in the previous sections, we first approach the well known Basset and Bagley-Torvik equations, which are in fact particular cases of our general equation \eqref{eq.multi.order}.

\begin{ex}\label{ex.basset}
The Basset equation.

We consider the linear fractional-order differential equation 
\begin{equation}\label{basset}
    \dot{x}(t)+\alpha ^c\!D^{q_1}x(t)+\gamma x(t)=0, \quad 0<q_1<1, 
\end{equation}
which arises in the investigation of the generalised Basset force that occurs when a spherical object sinks in an incompressible viscous fluid \cite{mainardi1997fractional}. It is clear that this is a particular case of equation \eqref{eq.multi.order} with $\beta=0$ and $q=1$.

The characteristic equation associated to \eqref{basset} is $$s+\alpha s^{q_1}+\gamma=0.$$

If $\gamma<0$, from Theorem \ref{thm.instab.multi} (i.) we have that the trivial solution of \eqref{basset} is unstable, regardless of the fractional order $q_1$. On the other hand, if $\alpha>$ and $\gamma>0$, by applying Theorem \ref{thm.instab.multi} (ii.), it follows that the trivial solution of equation \eqref{basset} is asymptotically stable, for any fractional order $q_1$.

Let us now assume that $\alpha\leq 0$ and $\gamma>0$. In this case, the stability of the linear equation \eqref{basset} depends on the fractional order $q_1\in(0,1)$. Indeed, considering the parametric equations of the curve $\Gamma(\gamma,q_1,q_2,1)$ defined in Lemma \ref{lem.curve.gamma}, taking into account that in this particular case $\beta=0$, we deduce that $\omega=\tan \frac{q_1\pi}{2}$. Replacing $\omega$ in the parametric equation for $\alpha$, we obtain the critical value \[\alpha^*(\gamma,q_1)=-\gamma^{1-q_1}\left(\cot \frac{q_1\pi}{2}\right)^{q_1}\sec \frac{q_1\pi}{2},\] 
which gives the same formula as in \cite{cermak2015stability} (in particular, for $q_1=\frac{1}{2}$, we have $\alpha^*(\gamma,\frac{1}{2})=-\sqrt{2\gamma}$). 
Then, following from Theorem \ref{thm.q}, it results that the trivial solution of equation \eqref{basset} is $\mathcal{O}(t^{-q_1})$-asymptotically stable if and only if $\alpha>\alpha^*(\gamma,q_1)$. 
\end{ex}

\begin{ex}\label{ex.bagley}
The Bagley-Torvik equation.

The following fractional-order differential equation arises when modeling the motion of a rigid plate that immerses in a viscous liquid \cite{torvik1984appearance}:
\begin{equation}\label{bagley}
    x''(t)+\alpha ^c\!D^{q_1}x(t)+\gamma x(t)=0,\quad 0<q_1<2
\end{equation}
where the term $\alpha ^c\!D^{q_1}x(t)$ is a fractional damping term which models the damping force in vibrating systems in viscous fluids. This is in fact a particular case of equation \eqref{eq.multi.order} with $\beta=0$ and $q=2$.

The characteristic equation associated to equation \eqref{bagley} is $$s^2+\alpha s^{q_1}+\gamma=0.$$

By a similar reasoning as in Example \ref{ex.basset}, the critical value for the parameter $\alpha$ is found to be $\alpha^*(\gamma,q_1)=0$, for any fractional order $q_1$ and any $\gamma>0$. Hence, by Theorems \ref{thm.q} and \ref{thm.instab.multi}, the trivial solution of equation \eqref{bagley} is asymptotically stable (of order $\mathcal{O}(t^{-\{q_1\}})$) if and only if $\alpha>0$ and $\gamma>0$, which is in accordance with the results presented in \cite{cermak2014exact}. 
\end{ex}

Further examples are given below, discussing multi-term fractional order differential equations which have been recently investigated in the scientific literature. 

\begin{ex} The inextendible pendulum.

The equation of motion of the inextendible pendulum \cite{seredynska2005nonlinear} is 
\begin{equation}\label{pendulum}
    {\phi}''+\mu\tau^{q_1}\cdot ^c\!\!D^{q_1}\phi+\nu\tau^{q_2}\cdot^c\!\!D^{q_2}\phi+\dfrac{g}{L}\sin \phi=0,
\end{equation}
Linearizing equation \eqref{pendulum} at the equilibrium $\phi_0=0$, we obtain
\begin{equation}\label{eq.pendulum}
    {\phi}''+\mu\tau^{q_1}\cdot  ^c\!\!D^{q_1}\phi+\nu\tau^{q_2}\cdot  ^c\!\!D^{q_2}\phi+\dfrac{g}{L} \phi=0.
\end{equation}
This is a particular case of equation \eqref{eq.multi.order} with $q=2$, $\alpha=\mu\tau^{q_1}>0$, $\beta=\nu\tau^{q_2}>0$, $\gamma=\frac{g}{L}>0.$
From Theorem \ref{thm.instab.multi} (ii.), it follows that the trivial equilibrium of equation \eqref{eq.pendulum} is asymptotically stable, regardless of the fractional orders $q_1$ and $q_2$. Moreover, Theorem \ref{thm.q} provides that
\[\phi(t)=\mathcal{O}(t^{-q'})\quad\text{as }t\rightarrow\infty,\quad\text{where }q'=\min\{\{q_1\},\{q_2\}\}.\]
\end{ex}

\begin{ex} Fractional harmonic oscillator. 

We consider the fractional differential equation of a linearly damped oscillator:
\begin{equation}\label{oscillator}
    x''+\alpha ^c\!D^{q_1}x +\beta x' +\gamma x=0,\quad\text{with }q_1\in(0,1)
\end{equation}
where $\alpha$ and $\beta$ are the friction coefficient and viscous damping coefficient, respectively \cite{chen2013stationary,guo2016stochastic}, and $\gamma>0$. This is in fact a particular case of equation \eqref{eq.multi.order} with $q=2$ and $q_2=1$.

The smooth parametric curve defined in Lemma \ref{lem.curve.gamma} becomes

\begin{equation}\label{gamma.ex4}
\Gamma(\gamma,q_1,1,2):\quad \begin{cases} \alpha&=\gamma^{1-\frac{q_1}{2}}\omega^{-\frac{q_1}{2}}(\omega-1)\sec \frac{q_1\pi}{2}\\
\beta&=\gamma^{\frac{1}{2}}\omega^{-\frac{1}{2}}(\omega-1)\tan \frac{q_1\pi}{2}
\end{cases}
\end{equation}

Moreover, as a particular case for $\gamma=4$, several curves $\Gamma(4,q_1,1,2)$ are plotted in Fig. \ref{fig.curves.ex4}, for different values of the fractional order $q_1\in(0,1)$. As in the general case explored in the previous sections, it can be noticed that the plotted curves do not intersect neither the stability region $S(\gamma)$, nor the instability region $U(\gamma)$.

\begin{figure}[htbp]
    \centering
    \includegraphics[width=0.5\linewidth]{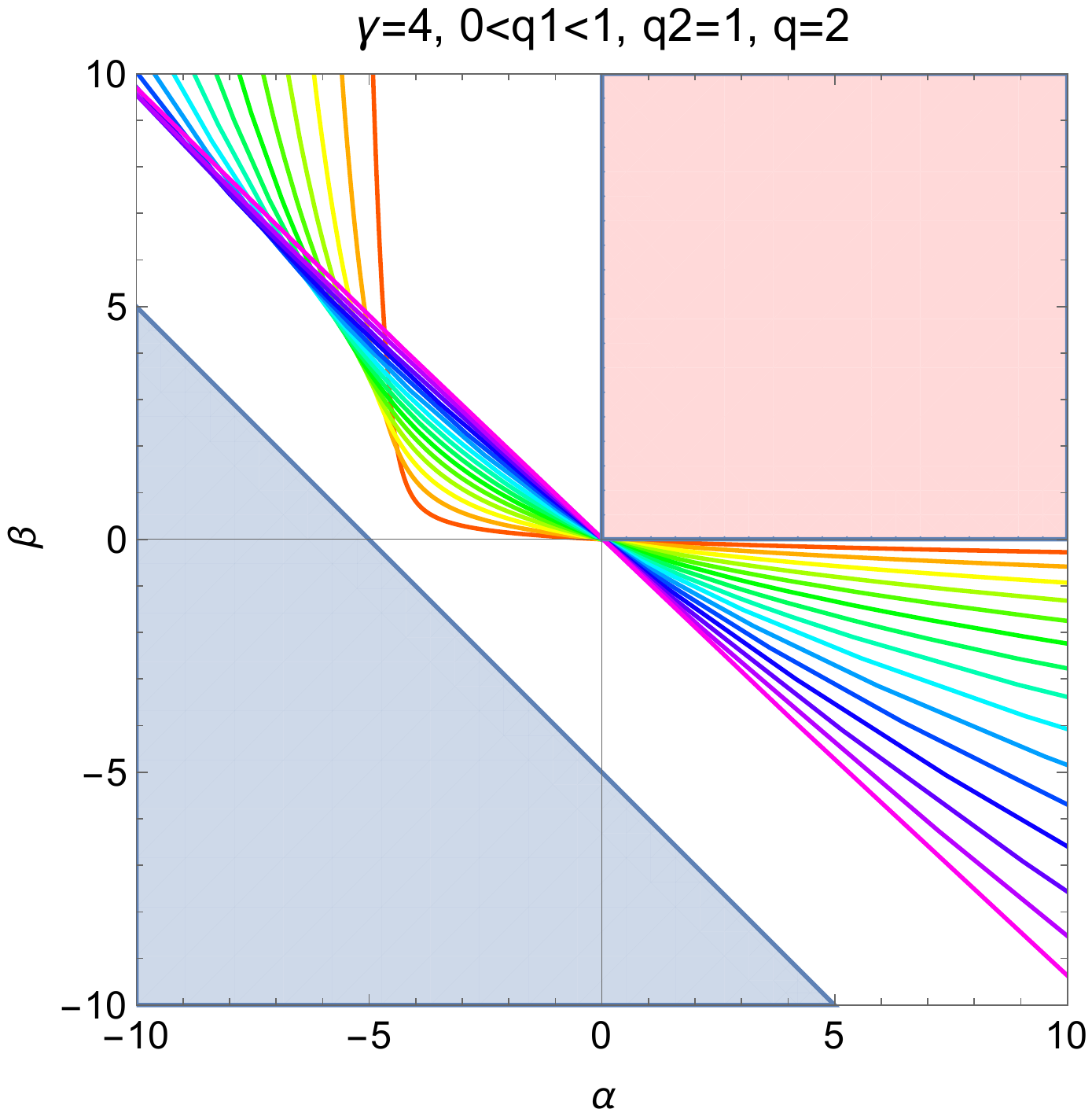}
    \caption{Curves $\Gamma(\gamma,q_1,1,2)$ given by \eqref{gamma.ex4}, for $\gamma=4$ and different values for the fractional order $q_1=k/16$, $k=\overline{1,15}$ (assigned colors range from red to purple with increasing values of $q_1$). The red / blue regions represent the fractional-order independent asymptotic stability / instability regions $S(\gamma)$ and $U(\gamma)$, respectively.}
    \label{fig.curves.ex4}
\end{figure}

Theorem \ref{thm.instab.multi} implies that if $\alpha>0$, $\beta>0$ and $\gamma>0$, the equation \eqref{oscillator} is asymptotically stable, regardless of the choice of the fractional order $q_1$. However, if for instance, a negative damping coefficient $\beta$ is taken into account, the stability properties of equation \eqref{oscillator} depend on the fractional order $q_1$ as well as the magnitude of the parameters. 

For example, if $\alpha=1.2$, $\beta=-1$ and $\gamma=4$, the critical value of the fractional order can be found by numerically solving the parametric equations \eqref{fig.curves.ex4} for $\omega$ and $q_1$, leading to $q_1^\star=0.81695$. It follows that in this case,  equation \eqref{oscillator} is asymptotically stable if and only if $q_1>q_1^\star$. The numerical solutions \cite{diethelm2004numerical} of equation \eqref{fig.curves.ex4} with respect to several values of $q_1$ are plotted in Figure \ref{fig.sim.ex4}, showing that when $q_1<q_1^\star$, the null solution of \eqref{oscillator} becomes unstable. It can also be noticed that as $q_1$ increases above the critical value $q_1^\star$, so does the rate of convergence to the trivial equilibrium. 

\begin{figure}[htbp]
    \centering
    \includegraphics[width=0.5\linewidth]{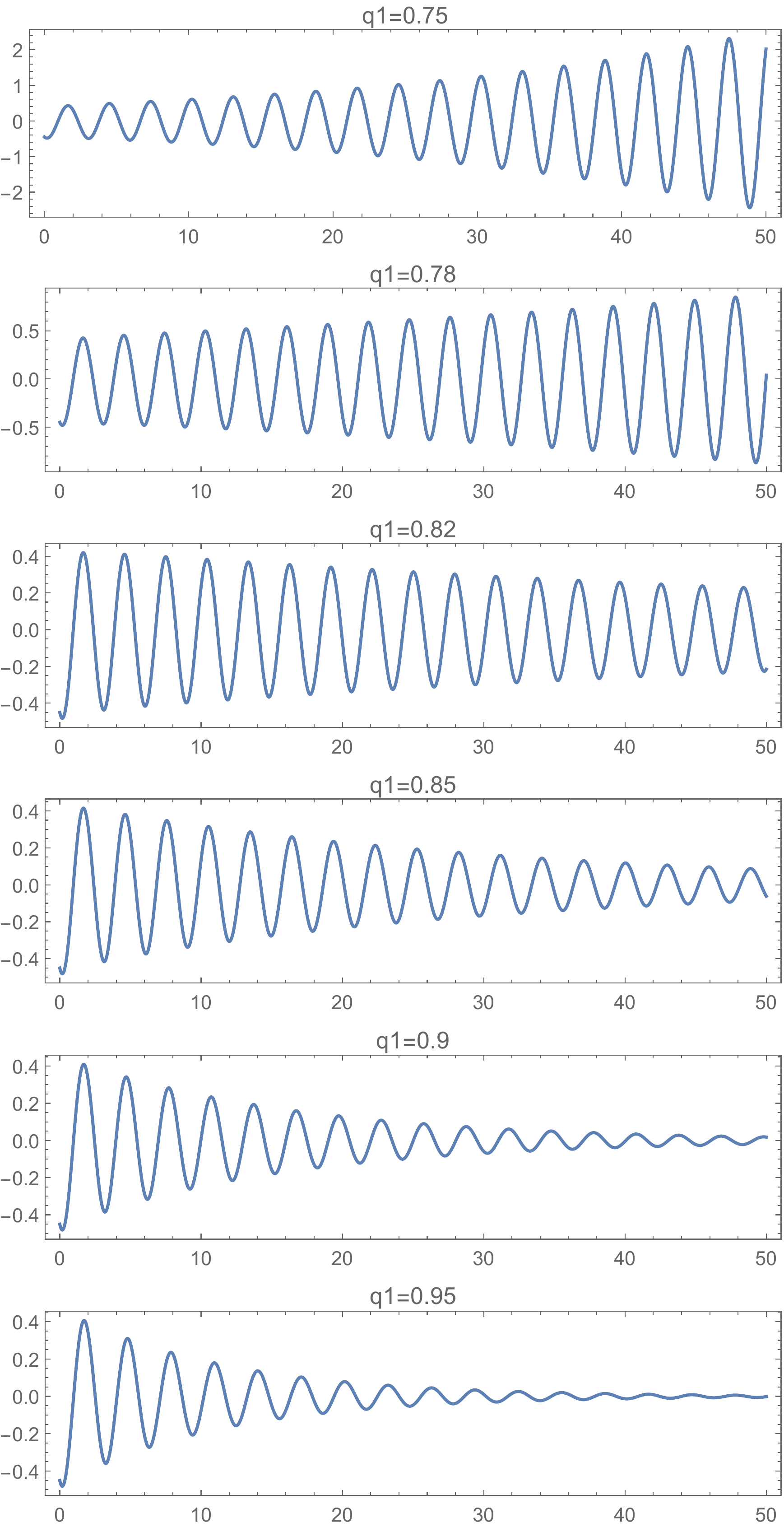}
    \caption{Numerical solutions of \eqref{oscillator} for $\alpha=1.2$, $\beta=-1$, $\gamma=4$ for several values of the fractional order $q_1$.}
    \label{fig.sim.ex4}
\end{figure}

\end{ex}

\section{Conclusions}

Multi-term linear fractional-order differential equations including three fractional-order derivatives have been analyzed, in extension to some recent results that have been obtained for linear incommensurate bidimensional autonomous fractional-order systems. Necessary and sufficient conditions have been obtained for the fractional-order-dependent and fractional-order-independent stability and instability of the considered equation. It has been shown that the obtained theoretical results can be succesfully applied for the particular cases of the Basset and Bagley-Torvik equations, for a multi-term fractional differential equation describing an inextensible pendulum with fractional damping terms as well as for a fractional harmonic oscillator. 

Possible generalizations to the case of general multi-term fractional-order differential equations will be investigated in a future work. 
\bibliographystyle{plain}

\bibliography{bibliografie}
\end{document}